\documentclass{article}
\usepackage{amsmath,amssymb,amsthm,bm}
\usepackage[a4paper]{geometry}
\usepackage{algorithm,algorithmic}
\usepackage{booktabs}
\usepackage[colorlinks=false,pdfborder={0 0 0}]{hyperref}

\usepackage{xcolor}

\usepackage{authblk}
\usepackage{multirow}
\usepackage{nicematrix}

\DeclareMathOperator{\cond}{cond}

\DeclareMathOperator{\diag}{diag}
\DeclareMathOperator{\Diag}{Diag}

\DeclareMathOperator{\Span}{span}
\DeclareMathOperator{\trace}{trace}

\renewcommand{\Re}{\mathop{\mathrm{Re}}}
\renewcommand{\Im}{\mathop{\mathrm{Im}}}

\newcommand{\fro}{\mathsf F}

\newcommand*{\set}[1]{\lbrace#1\rbrace}
\newcommand*{\SET}[2]{\left\lbrace#1\colon #2\right\rbrace}

\newcommand*{\trans}{^{\mathsf T}}
\newcommand*{\herm}{^{\mathsf H}}

\newcommand*{\itrans}{^{-\mathsf T}}

\newcommand{\bmat}[1]{\begin{bmatrix}#1\end{bmatrix}}

\newcommand*{\conj}[1]{\bar{#1}}

\newcommand*{\mi}{\mathrm i}

\def\adots{\mathinner{\mkern2mu\raise1pt\hbox{.}\mkern2mu
    \raise4pt\hbox{.}\mkern2mu\raise7pt\hbox{.}\mkern1mu}}

\newcommand*{\tol}{\mathtt{tol}}
\newcommand*{\macheps}{\bm u}
\newcommand*{\residual}{\mathtt{res}}

\newcommand*{\maxres}{\mathtt{res}_{\max}}

\definecolor{bkgndcolor}{rgb}{0.78,0.93,0.8}

\graphicspath{{fig/}}

\newenvironment{keywords}{\medskip\textbf{Keywords:}}{}

\newenvironment{MSCcodes}{\medskip\textbf{AMS subject classifications (2020).}}{}

\newtheorem{theorem}{Theorem}

\title{A Structure-Preserving LOBPCG Algorithm for the
Bethe--Salpeter Eigenvalue Problem}
\author[1]{Xinyu Shan}
\author[1,2]{Meiyue Shao}
\affil[1]{School of Data Science, Fudan University, Shanghai 200433, China}
\affil[2]{MOE Key Laboratory for Computational Physical Sciences, Fudan
University, Shanghai 200433, China}
\date{\today}

\begin{document}

\maketitle

\begin{abstract}
The Bethe--Salpeter eigenvalue problem is a structured eigenvalue problem
arising in many-body physics.
In practice, a few of the smallest positive eigenvalues and the corresponding
eigenvectors need to be computed.
In principle, the LOBPCG algorithm can be applied to solve this eigenvalue
problem.
However, direct application of the existing LOBPCG algorithm does not
utilize the inherent structure of the problem.
We design a structure-preserving eigensolver based on the indefinite LOBPCG
algorithm to efficiently solve the Bethe--Salpeter eigenvalue problem.
We propose an improved Hetmaniuk--Lehoucq trick for the indefinite inner
product, as well as an adaptive, multi-level orthogonalization strategy to
ensure the numerical stability of our algorithm.
Numerical experiments demonstrate that the proposed algorithm can efficiently
and accurately compute the desired eigenpairs.
Since the symplectic eigenvalue problem for symmetric positive definite
matrices can be transformed to the Bethe--Salpeter eigenvalue problem, our
algorithm can naturally be adopted as a symplectic eigensolver.
\end{abstract}

\begin{keywords}
Bethe--Salpeter eigenvalue problem,
symplectic eigenvalue problem,
structure-preserving LOBPCG algorithm,
orthogonalization,
improved Hetmaniuk--Lehoucq trick
\end{keywords}

\begin{MSCcodes}
65F15, 65F25, 15A18
\end{MSCcodes}

\section{Introduction}
\label{sec:introduction}
In the field of many-body physics, the two-particle Green's function is
governed by the Bethe--Salpeter equation (BSE)~\cite{SB1951}, which describes
electron--hole interaction effects.
The excitation energy levels, corresponding to the poles of the Green's
function, can be determined by computing the eigenvalues of a Hamiltonian
operator \(\mathcal{H}\).
After appropriate discretization, the Hamiltonian operator \(\mathcal{H}\) can
be discretized into a block matrix of the form
\begin{align}
\label{eq:BSE-matrix}
H=\bmat{A & B \\ -\conj B & -\conj A}\in\mathbb C^{2n\times2n},
\end{align}
where \(A\herm=A\) and \(B\trans=B\).
The Bethe--Salpeter Hamiltonian (BSH) matrix \(H\) can be expressed as the
product of two Hermitian matrices given by
\begin{align}
\label{eq:definition_COmega}
C_n=\bmat{I_n & 0 \\ 0 & -I_n},\qquad
\Omega=\bmat{A & B \\ \conj B & \conj A}.
\end{align}
In most physical systems, the matrix \(\Omega\) is positive definite.
In this case, \(H\) is referred to as a \emph{definite} BSH matrix, and its
eigenvalues are real and occur in positive and negative pairs.
In this paper, we restrict ourselves to definite BSH matrices unless otherwise
specified.

Several methods have been proposed to solve the Bethe--Salpeter eigenvalue
problem (BSEP).
One popular approach is the \emph{Tamm--Dancoff approximation}
(TDA)~\cite{RL2000}, which simplifies the problem by dropping the off-diagonal
blocks of \(H\), computing instead the eigenpairs of the simplified Hermitian
matrix.
However, the accuracy of TDA is sometimes terribly low so that researchers
become more and more interested in full BSE solvers~\cite{GMG2009,SdYDL2016}.
In~\cite{SdYDL2016,SY2017}, the authors established some basic theoretical
properties of the BSEP.
These properties are used to develop a structure-preserving parallel algorithm
for computing all eigenpairs of a definite BSH matrix~\cite{SdYDL2016}.
The \(\Gamma\)QR algorithm~\cite{GLZ2018} and a doubling
algorithm~\cite{GCL2019} are proposed to diagonalize a general (indefinite)
BSH matrix.
When only a few smallest positive eigenvalues are needed, there are also
iterative solvers for solving this problem~\cite{BDKK2017,GMG2009,GMG2011,
GLZ2018}.
In some practical applications, the optical absorption spectrum is of
interest.
Algorithms for this purpose have also been studied in~\cite{SdLYDL2018}.

In this paper, we develop a structure-preserving locally optimal block
preconditioned conjugate gradient (LOBPCG) algorithm to compute a few of the
smallest positive eigenvalues and their corresponding eigenvectors of a
definite BSH matrix.
In principle, the BSEP can be reformulated as a symmetric generalized
eigenvalue problem
\[
C_nZ=\Omega Z\Lambda^{-1},
\]
and thus can be solved by the existing LOBPCG algorithm~\cite{Knyazev2001}.
It is certainly possible to adjust the existing LOBPCG algorithm so that the
inherent structure of the BSH matrix is exploited.
However, to ensure numerical stability, a practical implementation of such an
algorithm has a relatively high computational cost on
(re-)orthogonalization~\cite{DSYG2018,HL2006}.
To enhance the computational efficiency, we focus on an equivalent
symmetric indefinite generalized eigenvalue problem
\[
\Omega Z=C_nZ\Lambda.
\]
We shall develop a structure-preserving LOBPCG algorithm based on an
indefinite variant of the LOBPCG algorithm~\cite{KMS2014} to solve this
problem.
The orthogonalization is based on the \(C_n\)-inner product, which is much
cheaper to evaluate compared to the \(\Omega\)-inner product.
The price to pay is that orthogonalization based on the \(C_n\)-inner product
can be numerically \emph{unstable} because the growth factor is theoretically
unbounded.
With the presence of rounding errors, the indefinite LOBPCG algorithm may
produce inaccurate solutions or even break down.
We shall discuss how to incorporate the improved Hetmaniuk--Lehoucq (IHL)
trick with reorthogonalization to enhance the numerical stability.
As a byproduct, our algorithm can also be used to solve the symplectic
eigenvalue problem, which is mathematically equivalent to the definite BSEP.

The rest of this paper is organized as follows.
In Section~\ref{sec:preliminaries}, we define the notation and introduce some
theoretical results relevant to the BSEP, along with the classical LOBPCG
algorithm.
Section~\ref{sec:algorithm} presents the implementation details of several
variants of the structure-preserving LOBPCG algorithm.
We provide the structured version of the IHL trick within the context of the
\(C_n\)-inner product.
In addition, we propose a multi-stage orthogonalization strategy and some
protection mechanisms to prevent interruption in the algorithm.
In Section~\ref{sec:symplectic}, the proposed structure-preserving LOBPCG
algorithm is applied to the symplectic eigenvalue problem based on the
equivalence theorem between the BSEP and the symplectic eigenproblem.
In Section~\ref{sec:experiments}, numerical experiments are performed to
demonstrate the effectiveness of the proposed algorithm.

\section{Preliminaries}
\label{sec:preliminaries}

\subsection{Bethe--Salpeter eigenvalue problem}
Let \(\Phi(X,Y)\) be a specific type of the structured matrix
\[
\Phi(X,Y)=\bmat{X & \bar Y \\ Y & \bar X},
\]
where the dimensions of matrices \(X\) and \(Y\) are same.
Then the matrix \(\Omega\) defined by \eqref{eq:definition_COmega} can be represented 
as \(\Omega=\Phi(A,\conj B)\).
Theorem~\ref{thm:BSE-eig} states that a definite BSH matrix has a structured
spectral decomposition.

\begin{theorem}[{\cite[Theorem~3]{SdYDL2016}}]
\label{thm:BSE-eig}
A definite BSH matrix \(H\) is diagonalizable and has a real spectrum.
Furthermore, it admits a spectral decomposition of the form
\begin{equation}
\label{eq:BSE-spectral-decomposition}
H=\bmat{X & \conj Y \\ Y & \conj X}
\bmat{\Lambda & 0 \\ 0 & -\Lambda}
\bmat{X & -\conj Y \\ -Y & \conj X}\herm
=\Phi(X,Y)C_n\Phi(\Lambda,0)\Phi(X,-Y)\herm,
\end{equation}
where \(\Lambda=\diag\set{\lambda_1,\ldots,\lambda_n}\) with
\(0<\lambda_1\leq\lambda_2\leq\dotsc\lambda_n\), and
\(\Phi(X,-Y)\herm\Phi(X,Y)=I_{2n}\).
\end{theorem}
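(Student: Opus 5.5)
We reduce the problem to a Hermitian eigenproblem and then recover the block structure from a conjugation symmetry. Since \(H=C_n\Omega\) with \(\Omega\) Hermitian positive definite, let \(\Omega^{1/2}\) be its Hermitian positive definite square root. Then \(H\) is similar to the Hermitian matrix \(M=\Omega^{1/2}C_n\Omega^{1/2}\), because \(\Omega^{1/2}H\Omega^{-1/2}=M\). Hence \(H\) is diagonalizable with real spectrum.

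By Sylvester's law of inertia, \(M\) is congruent to \(C_n\) and therefore has exactly \(n\) positive and \(n\) negative eigenvalues; in particular \(0\) is not an eigenvalue. Write \(M=QDQ\herm\) with \(Q\) unitary and \(D\) real diagonal, and set \(Z=\Omega^{-1/2}Q|D|^{1/2}\). Then
\[
HZ=\Omega^{-1/2}M\Omega^{1/2}\Omega^{-1/2}Q|D|^{1/2}=ZD,
\]
so \(Z\) diagonalizes \(H\). Moreover
\[
Z\herm C_nZ=|D|^{1/2}Q\herm\Omega^{-1/2}C_n\Omega^{-1/2}Q|D|^{1/2}.
\]
Since \(\Omega^{-1/2}C_n\Omega^{-1/2}=M^{-1}\) up to the factorization \(\Omega^{-1/2}C_n\Omega^{-1/2}=\Omega^{-1/2}(C_n\Omega C_n)\cdots\), it is cleaner to verify directly: \(Z\herm\Omega Z=|D|\). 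Combined with \(\Omega Z=C_nZD\), this gives \(Z\herm C_nZ=|D|D^{-1}=\operatorname{sign}(D)\). So the eigenvectors can be chosen \(C_n\)-orthonormal up to signs, with the positive eigenvalues carrying \(+1\).

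Next we exploit the conjugation symmetry. Let \(J=\bmat{0&I_n\\ I_n&0}\). A direct check from the block forms gives \(J\conj\Omega J=\Omega\) and \(JC_nJ=-C_n\), hence \(J\conj HJ=-H\). Consequently, if \(Hx=\lambda x\) then \(H(J\conj x)=-\lambda J\conj x\). This map sends a \(C_n\)-orthonormal basis \([x_1,\ldots,x_n]=\bmat{X\\Y}\) of the positive eigenspaces (normalized so that \(X\herm X-Y\herm Y=I\), eigenvalues \(\Lambda\) sorted increasingly) to \(J\bmat{\conj X\\ \conj Y}=\bmat{\conj Y\\ \conj X}\), a basis of the negative eigenspaces with eigenvalues \(-\Lambda\).

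Assembling, \(HZ=Z\Phi(\Lambda,0)C_n\) with \(Z=\Phi(X,Y)\), and these columns are all the eigenvectors because there are exactly \(n\) positive eigenvalues. It remains to verify \(\Phi(X,-Y)\herm\Phi(X,Y)=I_{2n}\), which is equivalent to \(Z\herm C_nZ=C_n\) since \(\Phi(X,-Y)=C_nZC_n\).

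\begin{itemize}
\item The diagonal blocks are \(I\) for the positive part by the normalization above, and \(-I\) for the negative part by conjugating it.
\item The off-diagonal blocks vanish by \(C_n\)-orthogonality of eigenvectors belonging to distinct eigenvalues \(\lambda\ne-\mu\). This orthogonality follows from the Hermitian pencil \((\Omega,C_n)\): if \(\Omega x=\lambda C_nx\) and \(\Omega y=\mu C_ny\), then \((\lambda-\mu)y\herm C_nx=0\), which applies since \(\lambda>0>-\mu\).
\end{itemize}

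Finally, \(Z\herm C_nZ=C_n\) gives \(Z^{-1}=C_nZ\herm C_n\). Hence
\[
H=Z\Phi(\Lambda,0)C_n\,C_nZ\herm C_n=Z\Phi(\Lambda,0)Z\herm C_n=ZC_n\Phi(\Lambda,0)\,C_nZ\herm C_n,
\]
using that \(C_n\) commutes with \(\Phi(\Lambda,0)\). Since \(C_nZ\herm C_n=\Phi(X,-Y)\herm\), this equals \(\Phi(X,Y)C_n\Phi(\Lambda,0)\Phi(X,-Y)\herm\), which is the claimed decomposition.

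The main obstacle is the bookkeeping of the normalization together with the choice of basis within repeated eigenvalues. For the positive eigenspaces, \(C_n\) restricted to each eigenspace is positive definite because \(x\herm C_nx=x\herm\Omega x/\lambda>0\). Gram–Schmidt in the \(C_n\)-inner product within each eigenspace therefore produces the basis \(\bmat{X\\Y}\), and the negative half is then forced by \(J\) and conjugation.
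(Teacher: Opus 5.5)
Your proof is correct, and it takes a genuinely different route from the paper. The paper does not prove this result directly. It quotes it from the literature, and in Theorem~\ref{thm:equivalence} recovers it from Williamson's theorem (Theorem~\ref{thm:Williamson}) through the unitary similarity $Q_n$. That similarity maps the pair $(C_n,\Omega)$ to $(-\mi J_n,M)$ with $M$ real symmetric positive definite, and then one sets $Z=Q_nSQ_n\herm$, where $S$ is the symplectic matrix from Williamson's theorem.

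On that route, the bookkeeping you do by hand comes for free:
\begin{itemize}
\item $Z\herm C_nZ=C_n$ is just $S\trans J_nS=J_n$ transported by $Q_n$.
\item The block form $Z=\Phi(X,Y)$ is just the realness of $S$ transported. This is because $\conj Q_n$ is $Q_n$ with its two block rows swapped, so $\conj Z$ equals $Z$ with its block rows and block columns swapped.
\item The pairing of $\lambda$ with $-\lambda$, the normalization within repeated eigenvalues, and the vanishing of the cross block $X\herm\conj Y-Y\herm\conj X$ are all inherited in the same way.
\end{itemize}
The price is that everything rests on Williamson's theorem, whose standard proofs use a comparable spectral argument: symmetrize by a square root, diagonalize, rescale.

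Your argument is self-contained and works directly in the complex BSE setting:
\begin{itemize}
\item Symmetrizing by $\Omega^{1/2}$ gives diagonalizability and a real spectrum.
\item Sylvester's law gives exactly $n$ positive and $n$ negative eigenvalues.
\item The antilinear map $x\mapsto J\conj x$, which anticommutes with $H$, plays the role of realness and produces both the $\Phi$ structure and the $\pm\lambda$ pairing.
\item Definiteness of $C_n$ on the positive eigenspaces plays the role of the symplectic normalization and handles repeated eigenvalues.
\end{itemize}
Read backwards through Theorem~\ref{thm:equivalence}, your argument therefore also yields an elementary proof of Williamson's theorem.

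Three cosmetic remarks:
\begin{itemize}
\item Clean up the aside about $\Omega^{-1/2}C_n\Omega^{-1/2}$. It equals $M^{-1}$ exactly, since $C_n^{-1}=C_n$, which gives $Z\herm C_nZ=\lvert D\rvert^{1/2}D^{-1}\lvert D\rvert^{1/2}$ at once.
\item That whole paragraph is in fact dispensable. Your final construction only uses diagonalizability, reality of the spectrum, and the inertia count.
\item Rename your $M$, $Q$, $J$, which clash with the paper's $M$, $Q_n$, $J_n$.
\end{itemize}
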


We denote the set of \(2n\times2k\) \(C_n\)-orthonormal matrices as
\(\mathcal C(2n,2k)\), i.e.,
\[
\mathcal C(2n,2k)=\SET{Z\in\mathbb C^{2n\times2k}}{Z\herm C_nZ=C_k}.
\]
We further define \(\mathcal H(2n,2k)\) as the set of all \(2n\times 2k\)
matrices that have a structure akin to \(\Phi(U,V)\), specifically,
\[
\mathcal H(2n,2k)=\SET{Z\in\mathbb C^{2n\times2k}}{Z=\Phi(U,V)}.
\]
Then it follows from Theorem~\ref{thm:BSE-eig} that the eigenvectors of \(H\)
can be arranged as a matrix in \(\mathcal C(2n,2k)\cap\mathcal H(2n,2k)\).

Let
\begin{equation}
\label{eq:definition-M}
Q_{n}=\frac{1}{\sqrt2}\bmat{I_n & -\mi I_n \\ I_n & \mi I_n}
\in\mathbb C^{2n\times2n},
\qquad
M=\bmat{\Re{(A+B)} & \Im{(A-B)}\\ -\Im{(A+B)} & \Re{(A-B)}}
\in\mathbb R^{2n\times2n}.
\end{equation}
Then \(Q_n\) is unitary and \(M\) is real symmetric.
It is shown in~\cite{SdYDL2016} that the BSEP can be reduced to a real
Hamiltonian eigenvalue problem and vice versa.
In fact, it can be easily verified that
\begin{equation}
\label{eq:QM}
Q_{n}\herm C_nQ_{n}=-\mi J_n, \qquad Q_{n}\herm\Omega Q_{n}=M.
\end{equation}
Then \(Q_n\herm HQ_n=-\mi J_nM\), with \(J_nM\) being a real Hamiltonian
matrix.
Conversely, given a \(2n\times2n\) real symmetric matrix
\[
M=\bmat{M_{1,1} & M_{1,2} \\ M_{2,1} & M_{2,2}},
\]
there exists a BSH matrix of the form~\eqref{eq:BSE-matrix}, where \(A\) and
\(B\) are determined by
\begin{equation*}
\label{eq:constructAB}
A=\frac{M_{1,1}+M_{2,2}}{2}+\mi\cdot\frac{M_{1,2}-M_{2,1}}{2},
\qquad
B=\frac{M_{1,1}-M_{2,2}}{2}-\mi\cdot\frac{M_{1,2}+M_{2,1}}{2}.
\end{equation*}
We remark that the equivalence between the BSEP and the real Hamiltonian
eigenvalue problem does \emph{not} involve positive definiteness in general.
It follows from~\eqref{eq:QM} that the BSH matrix \(H\) is definite if and
only if \(M\) is positive definite.

\subsection{Symplectic eigenvalue problem}
A matrix \(S\in\mathbb{R}^{2n\times 2k}\) is called \emph{symplectic} if
\(S\trans J_nS=J_k\), where
\[
J_{n}=\bmat{0 & I_n \\ -I_n & 0}.
\]
Denote by \(\mathcal S(2n,2k)\) the set of \(2n\times 2k\) matrices with
symplectic columns, i.e.,
\[
\mathcal S(2n,2k)=\SET{S\in\mathbb R^{2n\times2k}}{S\trans J_nS=J_k}.
\]
A symplectic matrix is said to be \emph{orthosymplectic} if it is also an
orthogonal matrix.
The set of \(2k\times 2k\) orthosymplectic matrices is denoted by
\(\mathcal{OS}(2k)\).

Let \(M\in\mathbb{R}^{2n\times 2n}\) be a real symmetric positive definite
matrix.
Williamson's theorem (see Theorem~\ref{thm:Williamson}) states that \(M\) is
symplectically congruent to a diagonal matrix.

\begin{theorem}[\cite{Williamson1936}]
\label{thm:Williamson}
For any symmetric positive definite matrix \(M\in\mathbb{R}^{2n\times2n}\),
there exists a symplectic matrix \(S\in\mathbb{R}^{2n\times 2n}\) such that
\begin{equation}
\label{eq:Williamson}
S\trans MS=\bmat{\Lambda & \\& \Lambda},
\end{equation}
where \(\Lambda=\diag\set{\lambda_1,\ldots,\lambda_n}\) with
\(0<\lambda_1\leq\lambda_2\leq\cdots\lambda_n\).
\end{theorem}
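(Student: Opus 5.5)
The plan is to derive Williamson's theorem from the structured spectral decomposition of Theorem~\ref{thm:BSE-eig}, using the correspondence between real symmetric matrices and BSH matrices established above. Given \(M\) symmetric positive definite, define \(A\) and \(B\) by the displayed formulas for \(A\) and \(B\) in terms of the blocks \(M_{i,j}\). These give \(A\herm=A\), \(B\trans=B\) and \(Q_n\herm\Omega Q_n=M\). Since \(M\succ0\), the matrix \(\Omega\) is positive definite, so \(H=C_n\Omega\) is a definite BSH matrix. Theorem~\ref{thm:BSE-eig} then yields \(W=\Phi(X,Y)\) with \(W\herm C_nW=C_n\) (equivalent to \(\Phi(X,-Y)\herm\Phi(X,Y)=I\)) and \(HW=WC_n\Phi(\Lambda,0)\). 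The latter is equivalent to \(\Omega W=C_nWC_n\Phi(\Lambda,0)\). Combining the two identities gives
\[
W\herm\Omega W=W\herm C_nW\,C_n\Phi(\Lambda,0)=\Phi(\Lambda,0)=\diag\set{\Lambda,\Lambda}.
\]
So \(W\) diagonalizes \(\Omega\) by congruence while preserving \(C_n\).

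Next I transport this back to the real setting by setting \(S=Q_n\herm WQ_n\). By \eqref{eq:QM}, \(S\herm MS=Q_n\herm W\herm\Omega WQ_n=Q_n\herm\Phi(\Lambda,0)Q_n\). Because \(\Phi(\Lambda,0)\) has real diagonal \(\Lambda\) and zero off-diagonal blocks, a direct computation with \(Q_n\) gives \(\diag\set{\Lambda,\Lambda}\). Likewise,
\[
S\herm(-\mi J_n)S=Q_n\herm W\herm C_nWQ_n=Q_n\herm C_nQ_n=-\mi J_n,
\]
so \(S\herm J_nS=J_n\).

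The step I expect to be the main obstacle is showing that \(S\) is real; otherwise \(S\) is only a complex symplectic matrix. The tool is the block structure \(W=\Phi(X,Y)\). Write \(P=\bmat{0&I_n\\I_n&0}\), so that \(\conj{\Phi(X,Y)}=P\Phi(X,Y)P\). From the definition of \(Q_n\) one checks \(\conj{Q_n}=PQ_n\). Hence
\[
\conj S=\conj{Q_n}\trans\conj W\conj{Q_n}=Q_n\herm P\cdot PWP\cdot PQ_n=S,
\]
since \(Q_n\trans\) conjugated gives \(Q_n\herm\) and \(P\trans=P\). So \(S\) is real. Then \(S\herm=S\trans\), and the two identities above become \(S\trans J_nS=J_n\) and \(S\trans MS=\diag\set{\Lambda,\Lambda}\).

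Finally, the ordering \(0<\lambda_1\le\dots\le\lambda_n\) is inherited directly from Theorem~\ref{thm:BSE-eig}. Positivity of the \(\lambda_i\) can also be seen independently from \(M\succ0\) and the congruence, which completes the argument.
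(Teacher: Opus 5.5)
Your proposal is correct and follows essentially the paper's route: the paper obtains Theorem~\ref{thm:Williamson} from Theorem~\ref{thm:BSE-eig} by inverting the unitary similarity \(Z=Q_nSQ_n\herm\) in the proof of Theorem~\ref{thm:equivalence}, which is exactly your \(S=Q_n\herm WQ_n\). Your check that \(S\) is real (via \(\conj W=PWP\) and \(\conj{Q_n}=PQ_n\)) supplies a detail the paper leaves implicit, but it contains a small notational slip: the left factor of \(\conj S\) is \(\overline{Q_n\herm}=Q_n\trans=(PQ_n)\herm=Q_n\herm P\), not \(\conj{Q_n}\trans\), which equals \(Q_n\herm\); your next expression already uses \(Q_n\herm P\), so the conclusion \(\conj S=S\) stands.
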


The diagonal matrix \(\diag\set{\Lambda,\Lambda}\) in~\eqref{eq:Williamson} is
known as \emph{Williamson's normal form} of \(M\).
Let us partition \(S\) by column as \(S=[s_1,s_2,\dotsc,s_{2n}]\).
Then each \(\lambda_i\) is called a \emph{symplectic eigenvalue} of~\(M\),
with normalized \emph{symplectic eigenvectors} \(s_i\) and \(s_{i+n}\).

\subsection{The LOBPCG algorithm}
\label{subsec:LOBPCG}
The \emph{locally optimal block preconditioned conjugate gradient} (LOBPCG)
algorithm is a block eigensolver for solving standard or generalized symmetric
eigenvalue problems~\cite{Knyazev2001}.
Suppose that the \(k\) smallest eigenvalues of a Hermitian--definite pencil
\((A_1,A_2)\) (i.e., \(A_1\herm=A_1\in\mathbb C^{N\times N}\),
\(A_2\herm=A_2\in\mathbb C^{N\times N}\), and \(A_2\) is positive definite) are
of interest, where \(k\ll N\).
Mathematically, in the \(i\)th iteration of the LOBPCG algorithm, the
Rayleigh--Ritz procedure on the \(3k\)-dimensional search subspace
\(\Span\set{Z^{(i)},Z^{(i-1)},T^{-1} R^{(i)}}\) is performed,
where \(Z^{(i)}\in\mathbb C^{N\times k}\) consists of the approximate
eigenvectors in the \(i\)th iteration, \(R^{(i)}\in\mathbb C^{N\times k}\)
consists of the residuals, and \(T\) is the preconditioner.
The Ritz vectors corresponding to the \(k\) smallest Ritz vectors are chosen
as new approximate eigenvectors \(Z^{(i+1)}\).

In practice, the LOBPCG algorithm needs to be implemented \emph{very carefully}
in order to attain steady convergence and satisfactory
accuracy~\cite{DSYG2018,HL2006}.
Computing an orthonormal basis in the \(A_2\)-inner product is crucial to
maintain numerical stability.
The \emph{improved Hetmaniuk--Lehoucq (IHL) trick} proposed in~\cite{DSYG2018}
is a clever approach that can cheaply construct a matrix \(P^{(i)}\) such
that \(\bigl[Z^{(i)},P^{(i)}\bigr]\) forms an orthonormal basis of
\(\Span\set{Z^{(i)},Z^{(i-1)}}\).
Let \(\bigl[Z^{(i)},Z_{\perp}^{(i)}\bigr]\) be the orthonormal basis of the
search subspace \(\Span\set{Z^{(i)},Z^{(i-1)},T^{-1} R^{(i)}}\).
The eigenvectors of the Rayleigh--Ritz procedure are partitioned accordingly as
\[
V=\bmat{V_{1,1} & V_{1,2} \\ V_{2,1} & V_{2,2}},
\qquad (V_{1,1}\in\mathbb C^{k\times k},~V_{2,2}\in\mathbb C^{2k\times2k}).
\]
By computing the compact LQ factorization \(V_{1,2}=LQ\) (where
\(L\in\mathbb C^{k\times k}\) and \(Q\in\mathbb C^{k\times2k}\)), the IHL
trick selects
\[
\bigl[Z^{(i+1)},P^{(i+1)}\bigr]
=\bigl[Z^{(i)},Z_{\perp}^{(i)}\bigr]\cdot V\cdot\bmat{I_k \\ Q\herm}
\in\mathbb C^{N\times2k}
\]
as the new orthonormal basis of \(\Span\set{Z^{(i+1)},Z^{(i)}}\).%
\footnote{Even if \(\dim\bigl(\Span\set{Z^{(i+1)},Z^{(i)}}\bigr)<2k\), the IHL
trick still produces an orthonormal basis with \(2k\) vectors.}
This trick reduces the cost of orthogonalization of
\(\bigl[Z^{(i+1)},Z^{(i)}\bigr]\)
and enhances the numerical stability, as orthogonalization is only performed
on a small matrix.

For the definite BSEP, the most straightforward way to apply the LOBPCG
algorithm is to set \(A_1=C_n\) and \(A_2=\Omega\).
In~\cite{KMS2014}, the LOBPCG algorithm is extended to an indefinite setting,
requiring only a linear combination of \(A_1\) and \(A_2\) to be positive
definite.
This allows us to solve the problem with \(A_1=\Omega\) and \(A_2=C_n\).
The benefit of the indefinite setting is that orthogonalization with the
\(C_n\)-inner product is cheaper than that with the \(\Omega\)-inner product.
The price to pay is the risk of numerical instability.
We shall discuss how to develop an efficient and stable LOBPCG algorithm for
the BSEP in Section~\ref{sec:algorithm}.

\section{A structure-preserving LOBPCG algorithm}
\label{sec:algorithm}
The paper~\cite{KMS2014} presents a general framework of the indefinite LOBPCG
algorithm, and illustrates how to develop a structure-preserving
indefinite LOBPCG algorithm for the linear response eigenvalue problem.
As a generalization of the linear response eigenvalue problem, the BSEP, which
can be reformulated as \(\Omega Z =C_nZ\Lambda\), also fits the framework of
indefinite LOBPCG algorithm.
In the following we discuss how to exploit the structure of the BSEP to
develop an efficient and robust LOBPCG algorithm.

\subsection{Structured orthogonalization}
\label{subsec:orthogonalization}
Orthogonalization is a key component to maintain the numerical stability of the
LOBPCG algorithm~\cite{DSYG2018,HL2006}.
In order to develop an indefinite LOBPCG algorithm for
\(\Omega Z =C_nZ\Lambda\), we first discuss how to perform structured
orthogonalization in the \(C_n\)-inner product.

Suppose we have a structured basis for the search space of the LOBPCG
algorithm, denoted as \(U=\Phi(U_X,U_Y)\), where
\(U_X=[X,P_X,W_X]\in\mathbb{C}^{n\times 3k}\) and
\(U_Y=[Y,P_Y,W_Y]\in\mathbb{C}^{n\times 3k}\).
A natural requirement is that the structure of \(U\) is preserved after the
\(C_n\)-orthogonalization.

\subsubsection{A structured CGS procedure}
\label{subsec:CGS}
Consider the structured \(C_n\)-orthogonalization performed by the classical
Gram--Schmidt (CGS) procedure.
Rearrange the columns of \(U\) in the following structured block form
\[
\left[
\begin{array}{cc|cc|c|cc|cc|c|cc}
U_{X,1} & \conj{U}_{Y,1} &
U_{X,2} & \conj{U}_{Y,2} &
\cdots &
U_{X,p-1} & \conj{U}_{Y,p-1} &
U_{X,p} & \conj{U}_{Y,p} &
\cdots &
U_{X,3k} & \conj{U}_{Y,3k} \\
U_{Y,1} & \conj{U}_{X,1} &
U_{Y,2} & \conj{U}_{X,2} &
\cdots &
U_{Y,p-1} & \conj{U}_{X,p-1} &
U_{Y,p} & \conj{U}_{X,p} &
\cdots &
U_{Y,3k} & \conj{U}_{X,3k}
\end{array}
\right].
\]
Suppose that the first \(p-1\) blocks have already been orthogonalized to a
structured block form, \(\Phi(U_{X,1:p-1},U_{Y,1:p-1})\).
Orthogonalizing \([U_{X,p}\herm,U_{Y,p}\herm]\herm\) against
\(\Phi(U_{X,1:p-1},U_{Y,1:p-1})\) yields
\[
\bmat{U_{X,p}\\U_{Y,p}}
\gets\bigl(I-\Phi(U_{X,1:p-1},U_{Y,1:p-1})C_{p-1}
\Phi(U_{X,1:p-1},U_{Y,1:p-1})\herm C_n\bigr)
\bmat{U_{X,p}\\U_{Y,p}}.
\]
Then the updated \(p\)th block satisfies
\[
\bmat{U_{X,p} & \conj U_{Y,p} \\ U_{Y,p} & \conj U_{X,p}}\herm C_n
\bmat{U_{X,1:p-1} & \conj U_{Y,1:p-1} \\ U_{Y,1:p-1} & \conj U_{X,1:p-1}}=0.
\]
Moreover, the two columns in the \(p\)th block are automatically orthogonal to
each other in the \(C_n\)-inner product.
The structure of the \(p\)th block remains unaltered after normalization.
Therefore, we conclude that the matrix \(U\) after this CGS procedure
preserves the structure \(U=\Phi(U_X,U_Y)\), and satisfies
\begin{equation*}
\label{eq:ortho-Cn}
\Phi(U_X,U_Y)\herm C_n\Phi(U_X,U_Y)=C_{3k}.
\end{equation*}

In practice, inexact arithmetic often causes loss of orthogonality.
To alleviate this issue, it is recommended to perform orthogonalization twice.
A rounding error analysis in~\cite{ROS2015} shows that, under mild conditions
on \(U\in\mathbb C^{2n\times 2p}\), the \(C_n\)-orthonormal basis after CGS2,
denoted by \(U^{\{2\}}\), satisfies
\(\lVert(U^{\{2\}})\herm C_n U^{\{2\}}-C_p\rVert_2
=O(\macheps)\cdot\lVert U^{\{2\}}\rVert^2_2\).
Thus, reorthogonalization is helpful even for indefinite inner products.

We remark that the modified Gram--Schmidt (MGS) procedure, and its variant
with reorthogonalization (MGS2), can also preserve the block structure in the
context of \(C_n\)-inner product.
As it is straightforward to derive these algorithms, we do not discuss them
here.

\subsubsection{An indefinite SVQB algorithm}
\label{subsec:SVQB-like}
Alongside the CGS algorithm, the SVQB algorithm proposed by Stathopolous and
Wu~\cite{SW2002} can also be used to perform the \(C_n\)-orthogonalization.
We refer to it as the indefinite SVQB algorithm.
One of the advantages of the SVQB algorithm is that it performs all
operations through the matrix--matrix multiplication, thereby effectively
reducing communication costs.

Taking the structured matrix \(U=\Phi(U_X,U_Y)\in\mathbb{C}^{2n\times 2p}\) as
an example, we outline a simplified process of the indefinite SVQB algorithm.
Let \(M_U=U\herm C_nU\), and assume that \(M_U\) is nonsingular.
First, solve the eigenvalue problem
\begin{align}
\label{eq:ortho-eig}
M_UF=\Phi(U_X,U_Y)\herm C_n\Phi(U_X,U_Y)F=F\Sigma.
\end{align}
Although the matrix \(M_U\) is not a BSH matrix, its block structure is
similar to that in~\eqref{eq:BSE-matrix}.
In fact, \(M_U\) is a Hermitian matrix whose eigenvalues appear in pairs
\(\pm\Sigma_{+}\).
It can be shown that~\(M_U\) also has structured eigenvectors of the form
\(F=\Phi(F_X,F_Y)\) satisfying \(F\herm F=I\);
see~\cite{Shan2025} for details.
With the help of the structured spectral decomposition
\[
M_U=\Phi(F_X,F_Y)C_p\Phi(\Sigma{+},0)\Phi(F_X,F_Y)\herm,
\]
we then update \(\Phi(U_X, U_Y)\) as
\[
\Phi(U_X, U_Y)\gets\Phi(U_X,U_Y)\Phi(F_X,F_Y)\Phi(\Sigma_{+}^{-1/2},0).
\]
From \eqref{eq:ortho-eig}, we can infer that
\begin{equation*}
\label{eq:ortho-Cn}
\Phi(U_X,U_Y)\herm C_n\Phi(U_X,U_Y)=C_{p}.
\end{equation*}
We remark that in practice it is recommended to perform a diagonal scaling on
\(M_U\) (or, equivalently, normalize the columns of \(U\)) before computing
the spectral decomposition, because this preprocessing step can largely
enhance the numerical stability.

The authors in~\cite{SW2002} provided an error analysis and the loss of
orthogonality in the context of the standard inner product.
Using a similar trick, we provide a rough estimate for the loss of
\(C_n\)-orthogonality;
see Appendix~\ref{sec:appendix-SVQB}.
In general, we need to perform one step of reorthogonalization on the indefinite
SVQB algorithm to ensure, under mild assumptions, that the loss of
orthogonality of the new basis \(U^{\{2\}}\) satisfies
\(\lVert(U^{\{2\}})\herm C_n U^{\{2\}}-C_p\rVert_2
=O(\macheps)\cdot\lVert U^{\{2\}}\rVert^2_2\).

\subsubsection{Remedy on breakdown}
We remark that orthogonalization in the \(C_n\)-inner product has the risk of
serious breakdown due to normalizing nonzero \(C_n\)-neutral vectors.
By a \(C_n\)-neutral vector, we mean a vector \(q\) with \(q\herm C_nq=0\).
Although breakdown is uncommon in practice, once it indeed occurs (or near
\(C_n\)-neutral vectors are encountered), we suggest performing
orthogonalization in the \(\Omega\)-inner product as a remedy.
It is worth noting that in the context of \(\Omega\)-orthogonalization, the
two columns within a \(2n\times2\) block are \emph{not} automatically
\(\Omega\)-orthogonal to each other.
Therefore, it is necessary to perform an additional structure-preserving
\(\Omega\)-orthogonalization within such a block.

\subsection{Structured IHL trick}
\label{subsec:IHL}
In the LOBPCG algorithm, a reliable and efficient strategy for maintaining
numerical stability is to adopt an IHL trick~\cite{DSYG2018,HL2006} for
updating the basis.
In the following we discuss how this trick is implemented in the \(C_n\)-inner
product framework.

Suppose that the basis of the search subspace, \(U=\Phi(U_X,U_Y)\), satisfies
\(U\herm C_{n}U=C_{3k}\), where \(U_X=[X,P_X,W_X]\in\mathbb{C}^{n\times 3k}\)
and \(U_Y=[Y,P_Y,W_Y]\in\mathbb{C}^{n\times 3k}\).
We employ the structure-preserving algorithm in~\cite{SdYDL2016} to compute
eigenpairs of the small-sized BSEP in the Rayleigh--Ritz procedure, resulting
in
\begin{equation}
\label{eq:RR_BSEP}
\Phi(U_X,U_Y)\herm\Omega\Phi(U_X,U_Y)\Phi(V_X, V_Y)
=C_{3k}\Phi(V_X, V_Y)\bmat{\Theta_{+} & \\ & -\Theta_{+}},
\end{equation}
where \(\Phi(V_X,V_Y)\) satisfies that
\begin{equation}
\label{eq:V-ortho}
\Phi(V_X,V_Y)\herm C_{3k}\Phi(V_X,V_Y)=C_{3k}.
\end{equation}
Partition \(V_X\) and \(V_Y\) as follows:
\[
V_X=\bmat{V_{X,1} & V_{X,2}}=\bmat{V_{X,11} & V_{X,12}\\ V_{X,21} & V_{X,22}},\quad
V_Y=\bmat{V_{Y,1} & V_{Y,2}}=\bmat{V_{Y,11} & V_{Y,12}\\ V_{Y,21} & V_{Y,22}},
\]
where \(V_{X,1}\in\mathbb{C}^{3k\times k}\),
\(V_{Y,1}\in\mathbb{C}^{3k\times k}\),
\(V_{X,11}\in\mathbb{C}^{k\times k}\) and \(V_{Y,11}\in\mathbb{C}^{k\times k}\).
Rearrange the columns of \(V=\Phi(V_X,V_Y)\) in the form
\begin{align*}
\left[\begin{array}{c|c|c|c}
V_{X,1} & \conj V_{Y,1} & V_{X,2} & \conj{V}_{Y,2}\\
\hline
V_{Y,1} & \conj V_{X,1} & V_{Y,2} & \conj{V}_{X,2}\\ 
\end{array}\right]
=
\left[\begin{array}{c|c|c|c}
V_{X,11} & \conj{V}_{Y,11} & V_{X,12} & \conj{V}_{Y,12} \\ 
V_{X,21} & \conj{V}_{Y,21} & V_{X,22} & \conj{V}_{Y,22}\\
\hline
V_{Y,11} & \conj{V}_{X,11} & V_{Y,12} & \conj{V}_{X,12}\\ 
V_{Y,21} & \conj{V}_{X,21} & V_{Y,22} & \conj{V}_{X,22}
\end{array}\right],
\end{align*}
where \([V_{X,1}\herm,V_{Y,1}\herm]\herm\) and
\([\conj V_{Y,1}\herm,\conj V_{X,1}\herm]\herm\) are the eigenvectors
corresponding to the \(k\) smallest positive eigenvalues and the \(k\) largest
negative eigenvalues of \eqref{eq:RR_BSEP}, respectively.
Let
\[
\check V
=\bmat{\check V_{1,1} & \check V_{1,2} \\ \check V_{2,1} & \check V_{2,2}},
\]
where
\begin{align*}
\check V_{1,1}
&=\bmat{V_{X,11} & \conj{V}_{Y,11} \\ V_{Y,11} & \conj{V}_{X,11}},
&\check V_{1,2}
&=\bmat{V_{X,12} & \conj{V}_{Y,12} \\ V_{Y,12} & \conj{V}_{X,12}},\\
\check V_{2,1}
&=\bmat{V_{X,21} & \conj{V}_{Y,21} \\ V_{Y,21} & \conj{V}_{X,21}},
&\check V_{2,2}
&=\bmat{V_{X,22} & \conj{V}_{Y,22} \\ V_{Y,22} & \conj{V}_{X,22}}.
\end{align*}
According to \eqref{eq:V-ortho}, we have
\(\check V\herm \check C_{3k}\check V=\check C_{3k}\),
where \(\check C_{3k}=\diag\{C_k, C_{2k}\}\).
This can be reformulated to
\[
\bigg(\bmat{\check V_{1,1} \\ \check V_{2,1}}C_k\bmat{\check V_{1,1} 
\\ \check V_{2,1}}\herm +\bmat{\check V_{1,2} \\ \check V_{2,2}}C_{2k}\bmat{\check V_{1,2} 
\\ \check V_{2,2}}\herm\bigg)\check C_{3k}=I_{6k}.
\]
In order to orthogonalize \([0,\check V_{2,1}\herm]\herm\) against
\([\check V_{1,1}\herm,\check V_{2,1}\herm]\herm\), in theory we can perform
\[
\left(I_{6k}-\bmat{\check V_{1,1} \\ \check V_{2,1}}C_k\bmat{\check V_{1,1} \\
\check V_{2,1}}\herm \check C_{3k}\right)\bmat{0 \\ \check V_{2,1}}=
\bmat{\check V_{1,2} \\ \check V_{2,2}}C_{2k}\bmat{\check V_{1,2} \\ \check V_{2,2}}
\herm \check C_{3k}\bmat{0 \\ \check V_{2,1}}
=-\bmat{\check V_{1,2} \\ \check V_{2,2}}
C_{2k}\check V_{1,2}\herm C_k\check V_{1,1}.
\]
Because \([\check V_{1,2}\herm,\check V_{2,2}\herm]\herm\) is already
\(\check C_{3k}\)-orthonormal, in practice we only need to orthogonalize the
\(4k\times 2k\) matrix \(C_{2k}\check V_{1,2}\herm C_k\).
This matrix is also a structured one because it can be represented as
\[
C_{2k}\check V_{1,2}\herm C_{k}
=\bmat{V_{X,12}\herm & -V_{Y,12}\herm \\
-V_{Y,12}\trans & V_{X,12}\trans}
=\Phi(V_{X,12}\herm,-V_{Y,12}\trans).
\]
Performing the structured \(C_{2k}\)-orthogonalization on this matrix yields a
\(C_{2k}\)-orthonormal basis \(Q=\Phi(Q_X,Q_Y)\).
We then update
\begin{align}\label{eq:IHL_update}
\begin{aligned}
P&=\bmat{U_{X} & \conj{U}_{Y} \\ U_{Y} & \conj{U}_{X}}
\bmat{V_{X,2} & \conj V_{Y,2} \\ V_{Y,2} & \conj V_{X,2}}
\bmat{Q_{X} & \conj{Q}_{Y} \\ Q_{Y} & \conj{Q}_{X}}
=\bmat{P_{X} & \conj{P}_{Y} \\ P_{Y} & \conj{P}_{X}},\\
Z&=\bmat{U_{X} & \conj{U}_{Y} \\ U_{Y} & \conj{U}_{X}}
\bmat{V_{X,1} & \conj V_{Y,1} \\ V_{Y,1} & \conj V_{X,1}}
=\bmat{X & \conj{Y} \\ Y & \conj{X}}
\end{aligned}
\end{align}
so that \(P\herm C_nZ=0\), \(P\herm C_nP=C_k\), and \(Z\herm C_nZ=C_k\).

\subsection{A structure-preserving LOBPCG algorithm}
\label{subsec:ILOBPCG}

\subsubsection{A general framework of the indefinite LOBPCG algorithm}
In the following we develop a structure-preserving LOBPCG algorithm for the
BSEP.
We have seen that if we impose the orthogonal basis of the search subspace to
be of the form \(U=\Phi(U_X,U_Y)\), then the output of the Rayleigh--Ritz
procedure preserves this structure.
Naturally, the residuals also exhibit such a structure because
\[
\bmat{R_X \\ R_Y}
=\Omega\bmat{X \\ Y}-C_{n}\bmat{X \\ Y}\Theta_{+},
\qquad
\bmat{\conj R_Y \\ \conj R_X}
=\Omega\bmat{\conj Y \\ \conj X}-C_n\bmat{\conj Y \\ \conj X}(-\Theta_{+}).
\]
Furthermore, if the preconditioners \(T_+\) and \(T_-\) fulfill
\(T_{-}=C_nJ_n\conj T_{+}C_nJ_n\), the preconditioned residuals inherit the
same structure as
\[
\bmat{W_X \\ W_Y}=T_+\bmat{R_X \\ R_Y},
\qquad
\bmat{\conj W_Y \\ \conj W_X}=T_-\bmat{\conj R_Y \\ \conj R_X}.
\]
A general framework of the structure-preserving indefinite LOBPCG algorithm is shown in
Algorithm~\ref{alg:LOBPCG-general}.

\begin{algorithm}[!tb]
\caption{A general framework of the indefinite LOBPCG algorithm for the BSEP.}
\label{alg:LOBPCG-general}
\begin{algorithmic}[1]
\REQUIRE \(\Omega\in\mathbb{C}^{2n\times 2n}\):
         Hermitian positive definite matrix;\\
         \(T_{+}\in\mathbb{C}^{2n\times 2n}\):
         Hermitian positive definite preconditioner;\\
         \(l\in\mathbb{N}\): number of desired positive eigenvalues;\\
         \(X^{(0)}\), \(Y^{(0)}\in\mathbb{C}^{n\times k}\):
         Initial guess with \(k\geq l\).
\ENSURE  \(l\) smallest positive eigenpairs of \(C_n\Omega\).
\WHILE{not converged}
  \STATE Compute the residual \([R_X\herm,R_Y\herm]\herm\)
  \STATE Apply the preconditioner:
         \([W_X\herm,W_Y\herm]\herm\gets T_+[R_X\herm,R_Y\herm]\herm\)
  \STATE \(U_{X}\gets[X,P_{X},W_{X}]\),\quad
         \(U_{Y}\gets[Y,P_{Y},W_{Y}]\)
  \STATE \(C_n\)-orthogonalize \(\Phi(U_X,U_Y)\)
         \label{alg:step:C-ortho}
  \STATE Perform the Rayleigh--Ritz procedure to the pair \((\Omega,C_n)\) on
         \(\Span\{\Phi(U_{X},U_{Y})\}\)
  \STATE Update \(X\), \(Y\), \(P_X\), \(P_Y\) by selecting the \(k\) smallest
         positive Ritz values
         \label{alg:step:update}
\ENDWHILE
\STATE Return the desired eigenpairs
\end{algorithmic}
\end{algorithm}

\subsubsection{Algorithmic details on orthogonalization}
\label{subsubsec:orthogonalization}
In Step~\ref{alg:step:update} of Algorithm~\ref{alg:LOBPCG-general}, the
structured IHL trick is highly recommended.
The \(C_{2k}\)-orthonormal basis \(Q\) in the IHL trick can be constructed
by the indefinite SVQB algorithm.
Then~\eqref{eq:IHL_update} can be used to update the basis so that \([Z,P]\)
is \(C_n\)-orthonormal.
To obtain a \(C_n\)-orthonormal basis of \([Z,P,W]\), we need to perform a
two-stage orthogonalization on \(W\) in Step~\ref{alg:step:C-ortho}.
The matrix \(W\) is first orthogonalized against \([Z,P]\) using a block CGS
algorithm.
Then the indefinite SVQB algorithm with reorthogonalization is performed on \(W\) to
produce a \(C_n\)-orthonormal basis.
Sometimes this two-stage orthogonalization needs to be repeated once more to
enhance the orthogonality.

Unlike the IHL trick in a positive definite inner product, in the indefinite LOBPCG
algorithm there is an additional risk of losing the \(C_n\)-orthogonality of
\([\hat Z,\hat P]\) in the IHL trick due to the accumulation of rounding
errors;
see Appendix~\ref{sec:appendix-basis} for details.
To alleviate the impact of rounding errors, it is recommended to explicitly
reorthogonalize \([\hat Z,\hat P]\) after the IHL trick update.

In principle, the reorthogonalization of \([\hat Z,\hat P]\) can be
accomplished by any \(C_n\)-orthogonalization algorithm.
However, in practice the SVQB algorithm is \emph{not} recommended here.
The purpose of reorthogonalization is to improve the orthogonality of an
approximately orthogonal basis with
\([\hat Z,\hat P]\herm C_n[\hat Z,\hat P]\approx C_{2k}\).
In CGS/MGS, the output of reorthogonalization is close to the input, so that
the (nearly) converged Ritz vectors only have minor changes in the subsequent
Rayleigh--Ritz process.
However, in the SVQB algorithm, the output of reorthogonalization may be
far away from \([\hat Z,\hat P]\) because the eigenvectors of \([\hat Z,\hat P]
\herm C_n[\hat Z,\hat P]\) are not necessarily close to \(I_{4k}\).
This often leads to less accurate Ritz vectors in the Rayleigh--Ritz process
due to rounding errors.

In the initial stages of the indefinite LOBPCG algorithm, explicit
reorthogonalization of \([\hat Z,\hat P]\) can sometimes be safely skipped.
To reduce the computational overhead, we employ a selective and adaptive
reorthogonalization strategy.
Instead of carefully monitoring the loss of orthogonality,
we randomly select a trial vector \(g\in\mathbb{C}^{2k}\), and compute
\begin{align*}
E_1\cdot g&=[\hat Z_X,\hat P_X]\herm([\hat Z_X,\hat P_X]\cdot g)
-[\hat Z_Y,\hat P_Y]\herm([\hat Z_Y,\hat P_Y]\cdot g)-g,\\
E_2\cdot g&=[\hat Z_Y,\hat P_Y]\trans([\hat Z_X,\hat P_X]\cdot g)
-[\hat Z_X,\hat P_X]\trans([\hat Z_Y,\hat P_Y]\cdot g).
\end{align*}
An additional \(C_n\)-orthogonalization step is performed only if
\begin{equation}
\label{eq:selective-orthogonalization}
(\lVert E_1\cdot g\rVert_\infty+\lVert E_2\cdot g\rVert_\infty)
\geq\min\set{\tau_0,\lVert\residual\rVert\cdot10^{-1}},
\end{equation}
where \(\tau_0\) is a prescribed constant (e.g., \(\tau_0=O(\macheps^{1/2})\)),
and \(\lVert\residual\rVert\) is the residual norm of the desired eigenpairs 
at the current iteration.
This heuristic strategy avoids unnecessary reorthogonalization when the
accuracy of the approximate eigenpairs is relatively low.
We call this variant of the indefinite LOBPCG algorithm, equipped with the IHL
trick and the selective reorthogonalization
strategy~\eqref{eq:selective-orthogonalization}, the \emph{LOBPCG-CIHL}
algorithm.

\subsubsection{An adaptive structured LOBPCG algorithm}
\label{subsubsec:adaptive}
Our computational experiences suggest that the LOBPCG-CIHL algorithm works
well in most cases, although this is not theoretically guaranteed by the a
priori worst-case rounding error analysis.
When the convergence curve of the LOBPCG-CIHL algorithm starts to oscillate
due to rounding errors, the standard LOBPCG algorithm on the
Hermitian--definite pencil \((C_n,\Omega)\) can be used to refine the accuracy.
As mentioned in Section~\ref{subsec:orthogonalization}, the orthogonalization
in the \(\Omega\)-inner product, which is required in the standard LOBPCG
algorithm, can be performed in a structure-preserving manner.
The projected subproblem in the Rayleigh--Ritz procedure also possesses a
BSH-like structure (see~\eqref{eq:ortho-eig}), and can be solved by a
structured algorithm in~\cite{Shan2025}.
We refer to this structured LOBPCG algorithm with the IHL trick operating in
the \(\Omega\)-inner product as the \emph{LOBPCG-\(\varOmega\)IHL}
algorithm.

In practice, we prefer using the LOBPCG-CIHL algorithm whenever possible, and
switch to the more expensive LOBPCG-\(\Omega\)IHL algorithm only as a
safeguard.
We propose an adaptive LOBPCG algorithm as illustrated in
Algorithm~\ref{alg:LOBPCG-CIHL-OmegaIHL}.
A natural question is how to detect the convergence stagnation in the
LOBPCG-CIHL algorithm.
A simple heuristic strategy is to monitor the slope of the convergence curve.
Since the LOBPCG algorithm typically exhibits a linear convergence rate for
large-scale problems~\cite{BL2022,Knyazev2001,SL2025}, the (asymptotic)
convergence curve in the logarithmic scale follows a straight line.
When the convergence curve significantly deviates from the ideal straight line,
we can switch from LOBPCG-CIHL to LOBPCG-\(\Omega\)IHL.

\begin{algorithm}[tp!]
\caption{An adaptive LOBPCG algorithm for the BSEP}
\label{alg:LOBPCG-CIHL-OmegaIHL}
\begin{algorithmic}[1]
\REQUIRE \(\Omega\in\mathbb{C}^{2n\times 2n}\):
         Hermitian positive definite matrix;\\
         \(T_{+}\in\mathbb{C}^{2n\times 2n}\):
         Hermitian positive definite preconditioner;\\
         \(l\in\mathbb{N}\): number of desired positive eigenvalues;\\
         \(X^{(0)}\), \(Y^{(0)}\in\mathbb{C}^{n\times k}\):
         Initial guess with \(k\geq l\).
\ENSURE  \(l\) smallest positive eigenpairs of \(C_n\Omega\).
\STATE \(\texttt{Current\_Alg}\gets\text{LOBPCG-CIHL}\)
\WHILE{not converged}
  \IF{\(\texttt{Current\_Alg} = \text{LOBPCG-CIHL}\)}
    \STATE Perform one step of LOBPCG-CIHL and check convergence
    \IF{the convergence stagnates}
      \STATE \texttt{Current\_Alg} \(\gets\)  LOBPCG-\(\Omega\)IHL
    \ENDIF
  \ELSE
    \STATE Perform one step of LOBPCG-\(\Omega\)IHL and check convergence
  \ENDIF
\ENDWHILE
\end{algorithmic}
\end{algorithm}
\section{Application to the symplectic eigenvalue problem}
\label{sec:symplectic}
It is known that the symplectic eigenvalue problem is equivalent to the
definite Bethe--Salpeter eigenvalue problem~\cite{SZ2023}.
In the following we provide two detailed statements on the equivalence.

\begin{theorem}
\label{thm:equivalence}
Let \(H\) be a definite BSH matrix as defined in~\eqref{eq:BSE-matrix},
and \(M\) be a symmetric positive definite matrix
defined by~\eqref{eq:definition-M}.
Then Theorems~\ref{thm:BSE-eig} and~\ref{thm:Williamson} are equivalent to
each other.
Moreover, the matrices \(\Lambda\) in~\eqref{eq:BSE-spectral-decomposition}
and~\eqref{eq:Williamson} are identical.
\end{theorem}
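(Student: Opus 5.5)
The plan is to transport everything through the unitary matrix \(Q_n\) of~\eqref{eq:definition-M}, using the identities~\eqref{eq:QM}. The central tool is a \emph{structure lemma}: for a square matrix \(Z\in\mathbb C^{2n\times2n}\), \(Z\in\mathcal H(2n,2n)\) if and only if \(Q_n\herm ZQ_n\) is real. To prove it, let \(\Pi=\bmat{0 & I_n\\ I_n & 0}\). A direct check gives \(Z=\Phi(X,Y)\) for some \(X,Y\) if and only if \(\Pi\conj Z\Pi=Z\), and also \(\conj Q_n=\Pi Q_n\). Hence
\[
\overline{Q_n\herm ZQ_n}=Q_n\herm\Pi\conj Z\Pi Q_n ,
\]
which equals \(Q_n\herm ZQ_n\) exactly when \(\Pi\conj Z\Pi=Z\). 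This gives a bijection \(\Phi(X,Y)\mapsto S=Q_n\herm\Phi(X,Y)Q_n\) between structured complex matrices and real \(2n\times2n\) matrices. I expect this lemma to be the main (though elementary) obstacle; the rest is bookkeeping.

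Next I reformulate Theorem~\ref{thm:BSE-eig} as a congruence. Note that \(C_n\Phi(X,Y)C_n=\Phi(X,-Y)\). So the normalization \(\Phi(X,-Y)\herm\Phi(X,Y)=I_{2n}\) is equivalent to \(\Phi(X,Y)\herm C_n\Phi(X,Y)=C_n\), i.e.\ \(\Phi(X,Y)^{-1}=C_n\Phi(X,Y)\herm C_n\). Since \(H=C_n\Omega\) and \(C_n\Phi(\Lambda,0)=\Phi(\Lambda,0)C_n\), the decomposition~\eqref{eq:BSE-spectral-decomposition} then becomes
\[
\Phi(X,Y)\herm\,\Omega\,\Phi(X,Y)=\Phi(\Lambda,0)=\diag\set{\Lambda,\Lambda}.
\]
Thus Theorem~\ref{thm:BSE-eig} is equivalent to the existence of a structured \(\Phi(X,Y)\) that is \(C_n\)-unitary and diagonalizes \(\Omega\) by congruence to \(\diag\set{\Lambda,\Lambda}\). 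Each step here is reversible.

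For the direction Theorem~\ref{thm:BSE-eig} \(\Rightarrow\) Theorem~\ref{thm:Williamson}, set \(S=Q_n\herm\Phi(X,Y)Q_n\), which is real by the structure lemma. By~\eqref{eq:QM}, \(C_n=\mi Q_nJ_nQ_n\herm\), so
\[
S\trans J_nS=S\herm J_nS=-\mi\,Q_n\herm\Phi\herm C_n\Phi Q_n=-\mi\,Q_n\herm C_nQ_n=J_n .
\]
Similarly,
\[
S\trans MS=Q_n\herm\Phi\herm\Omega\Phi Q_n=Q_n\herm\diag\set{\Lambda,\Lambda}Q_n=\diag\set{\Lambda,\Lambda},
\]
where the last equality holds because the block form of \(Q_n\) commutes with \(\diag\set{\Lambda,\Lambda}\). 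This gives Williamson's normal form with the same \(\Lambda\).

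For the converse, given a symplectic \(S\) from Theorem~\ref{thm:Williamson}, define \(\Phi:=Q_nSQ_n\herm\). It is structured by the lemma, and the same computations read backwards give \(\Phi\herm C_n\Phi=C_n\) and \(\Phi\herm\Omega\Phi=\diag\set{\Lambda,\Lambda}\), hence~\eqref{eq:BSE-spectral-decomposition}. Diagonalizability and reality of the spectrum follow immediately from this decomposition. Finally, \(\Lambda\) is literally the same matrix in both directions, and the ordering \(0<\lambda_1\le\dots\le\lambda_n\) is preserved. Positive definiteness of \(\Omega\) and \(M\) correspond via~\eqref{eq:QM}, so the hypotheses of the two theorems match.
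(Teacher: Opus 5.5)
Your proof is correct. It uses the same mechanism as the paper's: conjugation by \(Q_n\) together with~\eqref{eq:QM}, with \(Z=Q_nSQ_n\herm\) as the dictionary between the two problems. The organization, however, is different.

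The paper starts from Williamson's form and uses the symplectic identity \(J_nS\itrans=SJ_n\) to get \(Q_n\herm HQ_nS=-\mi SJ_n\diag\{\Lambda,\Lambda\}\). It then verifies only the eigenvector equation \(HZ=ZC_n\diag\{\Lambda,\Lambda\}\). Several points are left implicit there:
\begin{itemize}
\item that \(Z\) actually has the form \(\Phi(X,Y)\);
\item that it satisfies \(\Phi(X,-Y)\herm\Phi(X,Y)=I_{2n}\);
\item for the reverse direction, which is dismissed with ``unitary similarity is invertible'', that the recovered \(S\) is \emph{real} and symplectic.
\end{itemize}

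You instead rewrite Theorem~\ref{thm:BSE-eig} as the pair of congruences \(\Phi\herm C_n\Phi=C_n\) and \(\Phi\herm\Omega\Phi=\diag\{\Lambda,\Lambda\}\). You add the structure lemma: \(\Phi\) is structured iff \(Q_n\herm\Phi Q_n\) is real. The translation \(\Omega\leftrightarrow M\), \(C_n\leftrightarrow J_n\), structured \(\leftrightarrow\) real then becomes a clean bijection, and both directions are literally the same computation read forwards or backwards. This costs a little setup and avoids the identity \(J_nS\itrans=SJ_n\). In return it makes explicit exactly the points the paper glosses over, in particular the realness of \(S\) that Williamson's theorem requires. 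The paper's version is shorter and reads off the eigen-equation of \(H\) directly.

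One small slip: \eqref{eq:QM} gives \(C_n=-\mi Q_nJ_nQ_n\herm\), not \(+\mi\). So the factor in your display for \(S\herm J_nS\) should be \(+\mi\), giving \(\mi\cdot(-\mi J_n)=J_n\). The scalar enters once directly and once inverted, so your conclusion \(S\trans J_nS=J_n\) is unaffected.
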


\begin{proof}
Starting from Theorem~\ref{thm:Williamson}, we obtain
\[
Q_n\herm HQ_nS=-\mi J_nMS
=-\mi J_nS\itrans\bmat{\Lambda & 0 \\ 0 & \Lambda}
=-\mi SJ_n\bmat{\Lambda & 0 \\ 0 & \Lambda},
\]
where \(Q_n\) and \(M\) are defined in~\eqref{eq:definition-M}.
Let \(Z=Q_nSQ_n\herm\).
Then
\begin{multline*}
HZ=Q_n(Q_n\herm HQ_nS)Q_n\herm
=Q_n\left(-\mi SJ_n\bmat{\Lambda & 0 \\ 0 & \Lambda}\right)Q_n\herm\\
=(Q_nSQ_n\herm)(-\mi Q_nJ_nQ_n\herm)\bmat{\Lambda & 0 \\ 0 & \Lambda}
=ZC_n\bmat{\Lambda & 0 \\ 0 & \Lambda}.
\end{multline*}
Theorem~\ref{thm:BSE-eig} is thus valid.
As the proof above only involves unitary similarity, which is invertible, we
can also derive Theorem~\ref{thm:Williamson} from Theorem~\ref{thm:BSE-eig}.
\end{proof}

When only the \(k\) smallest positive eigenvalues of \(H\) are of interest,
two different forms of the trace minimization principle
(\cite[Theorem~4]{SY2017} and~\cite[Theorem~5]{BJ2015}), which are also
equivalent to each other, can be used to develop optimization-based
eigensolvers.
Theorem~\ref{thm:trace-min} characterizes the trace minimization principle.
The equivalence can be shown by the same technique as in the proof of
Theorem~\ref{thm:equivalence}, and is hence omitted.

\begin{theorem}
\label{thm:trace-min}
Let \(H\) be a definite BSH matrix as defined in~\eqref{eq:BSE-matrix},
and \(M\) be a symmetric positive definite matrix
defined by~\eqref{eq:definition-M}.
Then
\begin{align}
2\sum_{i=1}^k\lambda_i
&=\min_{\substack{S\in\mathbb C^{2n\times2k}\\S\herm J_nS=J_k}}
\trace(S\herm MS)
=\min_{\substack{S\in\mathcal S(2n,2k)}}\trace(S\trans MS)
\nonumber\\
&=\min_{\substack{Z\in\mathbb C^{2n\times2k}\\Z\herm C_nZ=C_k}}
\trace(Z\herm\Omega Z)
=\min_{\substack{Z\in\mathcal H(2n,2k)\cap\mathcal C(2n,2k)}}
\trace(Z\herm\Omega Z).\label{eq:trace-min}
\end{align}
\end{theorem}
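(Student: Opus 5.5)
The plan is to prove the four equalities in~\eqref{eq:trace-min} by transporting the constraint sets along the unitary change of basis \(Q_n\) from~\eqref{eq:definition-M}, as in the proof of Theorem~\ref{thm:equivalence}. The variational content is supplied by the two trace minimization principles cited before the statement.

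First I will establish a dictionary between the symplectic side and the Bethe--Salpeter side. For \(S\in\mathbb C^{2n\times2k}\) set \(Z=Q_nSQ_k\herm\). By~\eqref{eq:QM}, \(C_n=-\mi Q_nJ_nQ_n\herm\) and \(\Omega=Q_nMQ_n\herm\). Hence
\[
Z\herm C_nZ=-\mi Q_k(S\herm J_nS)Q_k\herm,
\qquad
\trace(Z\herm\Omega Z)=\trace(S\herm MS).
\]
It follows that \(S\herm J_nS=J_k\) if and only if \(Z\herm C_nZ=-\mi Q_kJ_kQ_k\herm=C_k\). Since \(S\mapsto Q_nSQ_k\herm\) is a bijection, the first minimum (over complex \(S\)) equals the third minimum (over \(Z\) with \(Z\herm C_nZ=C_k\)).

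Next I will check that the same map sends the real matrices \(S\in\mathbb R^{2n\times2k}\) exactly onto \(\mathcal H(2n,2k)\). Write \(S=\bmat{S_{11}&S_{12}\\S_{21}&S_{22}}\) and multiply out \(Q_nSQ_k\herm\); the result is \(\Phi(U,V)\) with
\[
U=\tfrac12\bigl(S_{11}+S_{22}+\mi(S_{21}-S_{12})\bigr),
\qquad
V=\tfrac12\bigl(S_{11}-S_{22}+\mi(S_{21}+S_{12})\bigr),
\]
in the same way that \(M\) corresponds to \(\Omega=\Phi(A,\conj B)\). Conversely, the conjugation-symmetry of \(\Phi(U,V)\) forces \(Q_n\herm\Phi(U,V)Q_k\) to be real. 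For real \(S\) we have \(S\herm=S\trans\), so \(\mathcal S(2n,2k)\) corresponds to \(\mathcal H(2n,2k)\cap\mathcal C(2n,2k)\). Therefore the second minimum equals the fourth.

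It remains to tie the two pairs together and to \(2\sum_{i=1}^k\lambda_i\). There are two trivial inclusions:
\begin{itemize}
\item \(\mathcal H\cap\mathcal C\subset\{Z\colon Z\herm C_nZ=C_k\}\), so the third minimum is at most the fourth.
\item Taking the columns of \(\Phi(X,Y)\) from Theorem~\ref{thm:BSE-eig} that belong to \(\lambda_1,\dots,\lambda_k\) gives a feasible \(Z\in\mathcal H\cap\mathcal C\) with trace \(2\sum_{i\le k}\lambda_i\). So the fourth minimum is at most \(2\sum_{i\le k}\lambda_i\).
\end{itemize}
The key step, and the main obstacle, is the lower bound \(\trace(Z\herm\Omega Z)\ge2\sum_{i\le k}\lambda_i\) for every \(Z\) with \(Z\herm C_nZ=C_k\). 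Here the constraint is indefinite, so the standard Ky Fan argument does not apply directly. I would invoke~\cite[Theorem~4]{SY2017}, or equivalently the real version~\cite[Theorem~5]{BJ2015} transported through the dictionary above.

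If a self-contained argument is wanted, I would proceed as follows. Expand \(Z=\Phi(X,Y)W\) in the eigenbasis of Theorem~\ref{thm:BSE-eig}, which is possible because \(\Phi(X,Y)\) is invertible with \(\Phi(X,Y)\herm\Omega\Phi(X,Y)=\diag\{\Lambda,\Lambda\}\). The constraint becomes \(W\herm C_nW=C_k\) and the objective becomes \(\trace(W\herm\diag\{\Lambda,\Lambda\}W)\). I would then bound this objective using the \(k\) positive and \(k\) negative directions of \(W\) separately, via a Cauchy-interlacing argument for the definite pencil \((\diag\{\Lambda,\Lambda\},C_n)\). Combining this lower bound with the two inequalities above closes the chain of equalities.
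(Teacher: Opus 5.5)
Your proposal is correct and follows the route the paper indicates. The paper omits the proof, saying only that the cited principles \cite[Theorem~4]{SY2017} and \cite[Theorem~5]{BJ2015} are linked by the same $Q_n$ change of basis as in Theorem~\ref{thm:equivalence}; your dictionary $Z=Q_nSQ_k\herm$, the inclusion $\mathcal H(2n,2k)\cap\mathcal C(2n,2k)\subset\{Z\colon Z\herm C_nZ=C_k\}$, and the feasible eigenvector choice spell that argument out, with the lower bound taken from the complex $C_n$-form in \cite{SY2017}.

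There are two minor points. First, the top-left block of $Q_nSQ_k\herm$ is $\tfrac12\bigl(S_{11}+S_{22}+\mi(S_{12}-S_{21})\bigr)$; you wrote its conjugate, which does not affect the structure claim. Second, your ``equivalently'' remark about \cite{BJ2015} overstates things. That principle is for real $S$, so transported through your dictionary it only yields the structured (fourth) minimum and gives no lower bound over all complex $Z$ with $Z\herm C_nZ=C_k$. That bound must come from \cite{SY2017}, or from \cite{BJ2015} applied to the realification $\bmat{\Re S & -\Im S\\ \Im S & \Re S}$ of a complex $S$.
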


With the help of~\eqref{eq:trace-min}, we automatically obtain a
structure-preserving LOBPCG algorithm that computes the \(k\) smallest
symplectic eigenvalues of a \(2n\times2n\) symmetric positive definite matrix.
A straightforward approach is to transform the positive definite matrix \(M\)
to the definite BSH matrix \(-\mi Q_nJ_nMQ_n\herm\) and then apply
Algorithm~\ref{alg:LOBPCG-CIHL-OmegaIHL}.

\section{Numerical experiments}
\label{sec:experiments}
In this section, we use experimental results to illustrate the effectiveness
and efficiency of our structure-preserving LOBPCG algorithm.
All numerical experiments were performed using MATLAB R2022b on a Linux server
with two 16-core Intel Xeon Gold~6226R~2.90~GHz CPUs and 1024~GB of main
memory.

For each test problem, we compute the \(l\) smallest positive eigenvalues and
the corresponding eigenvectors of \(H=C_n\Omega\) using the LOBPCG algorithm
with \(k=\max\set{\lceil3/2\cdot l\rceil,l+5}\).
The precision of the approximate eigenpair \((\theta_i,z_i)\) is measured
using the normalized residual
\[
\residual_i=\frac{\lVert\Omega z_i-C_nz_i\theta_i\rVert_2}
{(\lVert\Omega\rVert_2+\theta_i)\lVert z_i\rVert_2},
\]
where \(\lVert\Omega\rVert_2\) is estimated through
\(\lVert\Omega\rVert_2\approx\lVert\Omega G_r\rVert_{\fro}
/\lVert G_r\rVert_{\fro}\) using a Gaussian random matrix
\(G_r\in\mathbb C^{2n\times t}\) with \(t\ll n\).
The algorithm terminates if either
\[
\maxres=\max_{1\leq i\leq l}\residual_i\leq\tol=10^{-14},
\]
or the number of iterations exceeds \(\mathtt{max\_iter}=200\).

In Algorithm~\ref{alg:LOBPCG-CIHL-OmegaIHL}, we begin monitoring the slope
only after \(\maxres\) falls below \(10^{-10}\).
A switch occurs if the residual norm exhibits an upward trend or the
convergence curve significantly deviates from the expected linear behaviour in
the logarithmic scale.
Define the secant line slope between \((k-p)\)th and \(k\)th steps as
\[
s_{p}^{(k)}=\frac{\log_{10}(\maxres^{(k)})-\log_{10}(\maxres^{(k-p)})}{p}.
\]
Specifically, we switch from the \(C_n\)-inner product to the \(\Omega\)-inner
product if
\[
\maxres^{(k)}>\max\set{\maxres^{(k-1)},\maxres^{(k-2)}}
\quad\text{or}\quad
s_{5}^{(k)}>\frac{s_{10}^{(k)}}{2}.
\]

\subsection{Bethe--Salpater eigenvalue problems}
In this section, we examine several examples derived from the discretized
Bethe--Salpeter eigenvalue problems listed in Table~\ref{tab:BSE-example}.
The \(2n\times 2n\) dense BSH matrices are associated with the naphthalene,
gallium arsenide (\texttt{GaAs}), boron nitride (\texttt{BN}), and phosphorene
nanoribbon (\texttt{PNR}), respectively.
Preconditioners of all LOBPCG variants are set to \(\Phi(\Diag(A),0)\),
where \(\Diag(A)\) represents a diagonal matrix composed of the diagonal
elements of \(A\).

\begin{table}[!tb]
\centering
\caption{List of test examples of the BSE.}
\label{tab:BSE-example}
\begin{tabular}{ccccc}
\toprule
Cases & Name & Size (\(n\)) & \(\#\)Desired (\(l\))\\
\midrule
      1     &\texttt{naphthalene} & \phantom{00,0}32  & 3\\
      2     &\texttt{GaAs}        & \phantom{00,}128  & 12\\
      3     &\texttt{BN1}         & \phantom{0}2,304  & 23\\
      4     &\texttt{BN2}         & \phantom{0}2,304  & 50\\
      5     &\texttt{PNR}         & 10,000            & 50\\
\bottomrule
\end{tabular}
\end{table}

We evaluate the performance of several structured LOBPCG variants, including
LOBPCG-C (a simple ILOBPCG algorithm which uses CGS2 for \(C_n\)-orthogonalization 
without the IHL trick), LOBPCG-\(\Omega\)IHL (see Section~\ref{subsubsec:adaptive}),
LOBPCG-CIHL (see Section~\ref{subsubsec:orthogonalization}), and
Algorithm~\ref{alg:LOBPCG-CIHL-OmegaIHL}.
Convergence histories are illustrated in Figure~\ref{fig:compare_LOBPCG}.
For visual clarity, the oscillatory tails of LOBPCG-C are truncated in specific subplots.
The performance of LOBPCG variants is reported in Table~\ref{tab:performance-LOBPCG}.
When handling larger-scale problems or computing a lot of eigenvalues, LOBPCG-C
exhibits stagnation around \(10^{-12}\).
Although LOBPCG-CIHL may exhibit persistent residual oscillations, it ultimately 
reaches the desired accuracy level.
LOBPCG-\(\Omega\)IHL demonstrates superior stability at the cost of more computational 
overhead.
Algorithm~\ref{alg:LOBPCG-CIHL-OmegaIHL} effectively balances numerical accuracy and
efficiency.

\begin{figure}[!tb]
\centering
\begin{tabular}{cc}
\includegraphics[width=0.45\textwidth]{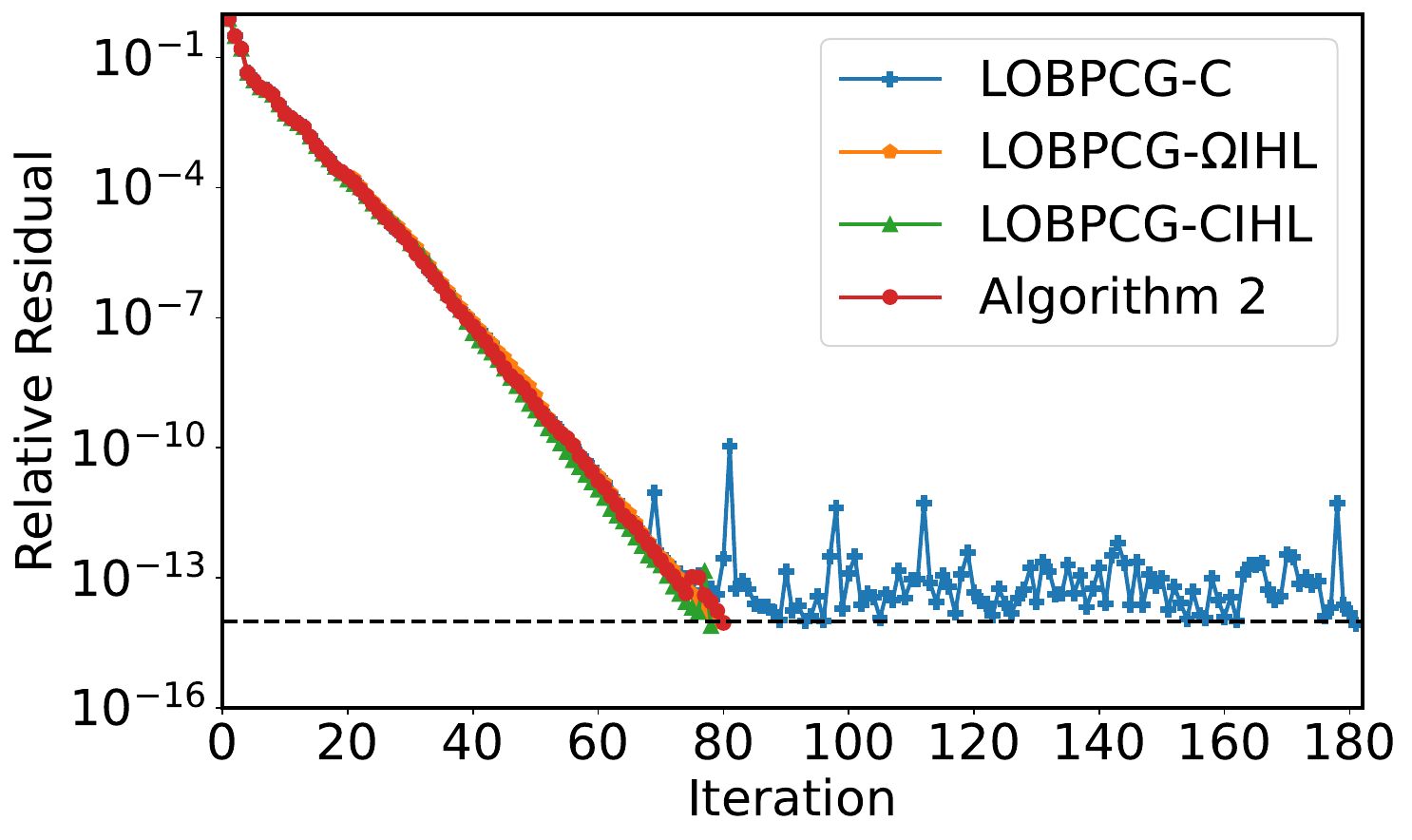} &
\includegraphics[width=0.45\textwidth]{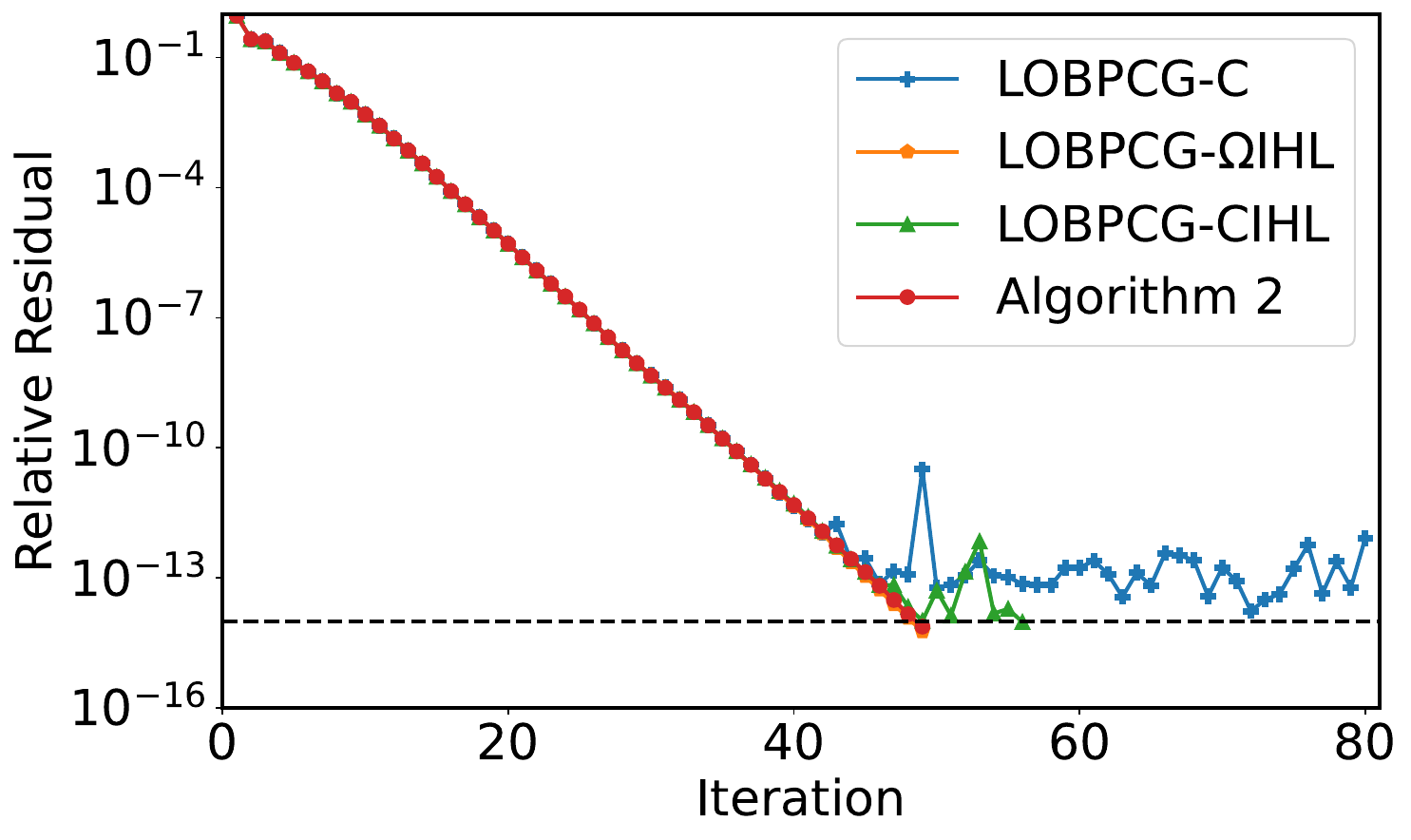} \\
Case \(2\) & Case \(3\)\\
\end{tabular}
~\\~\\~\\
\begin{tabular}{cc}
\includegraphics[width=0.45\textwidth]{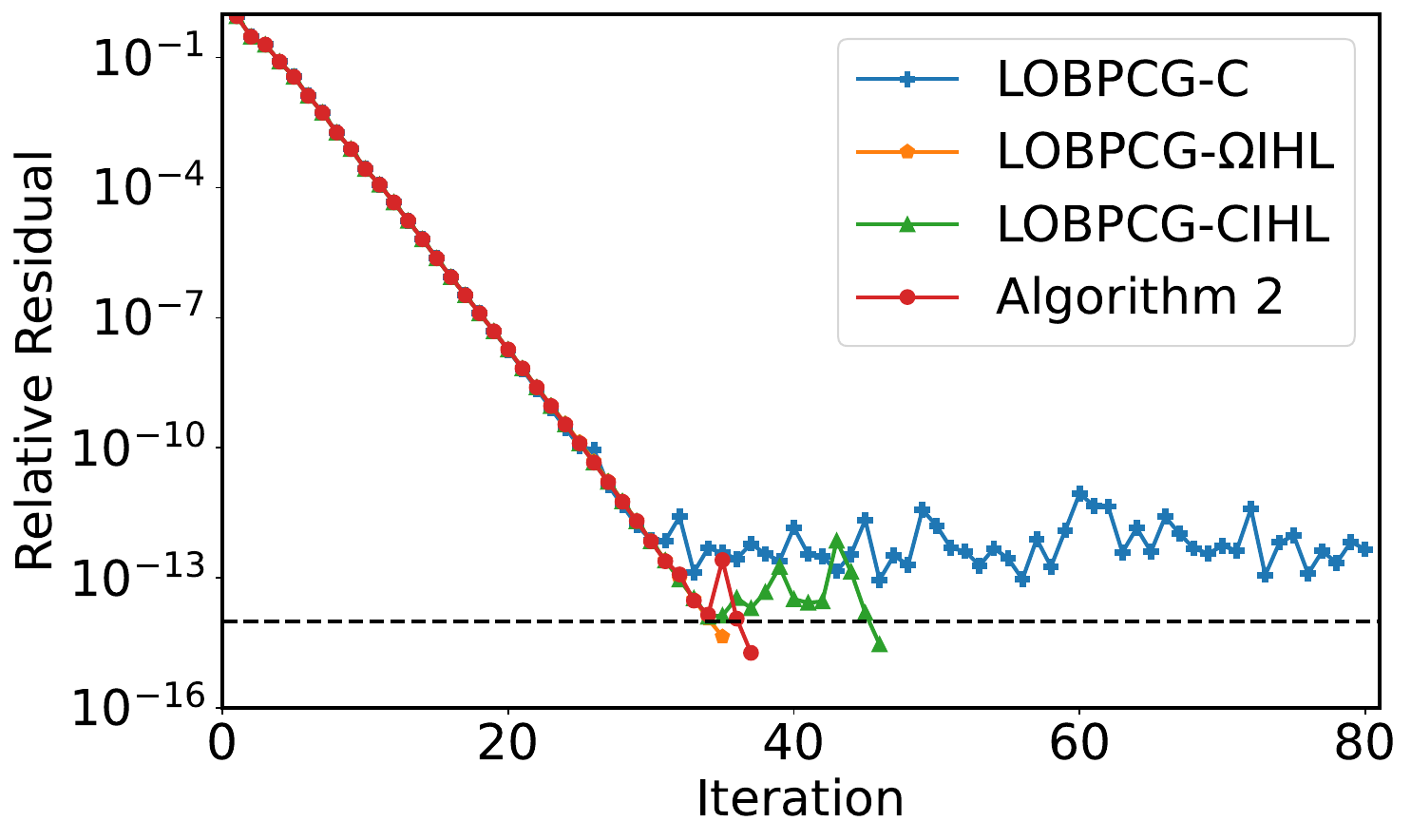} &
\includegraphics[width=0.45\textwidth]{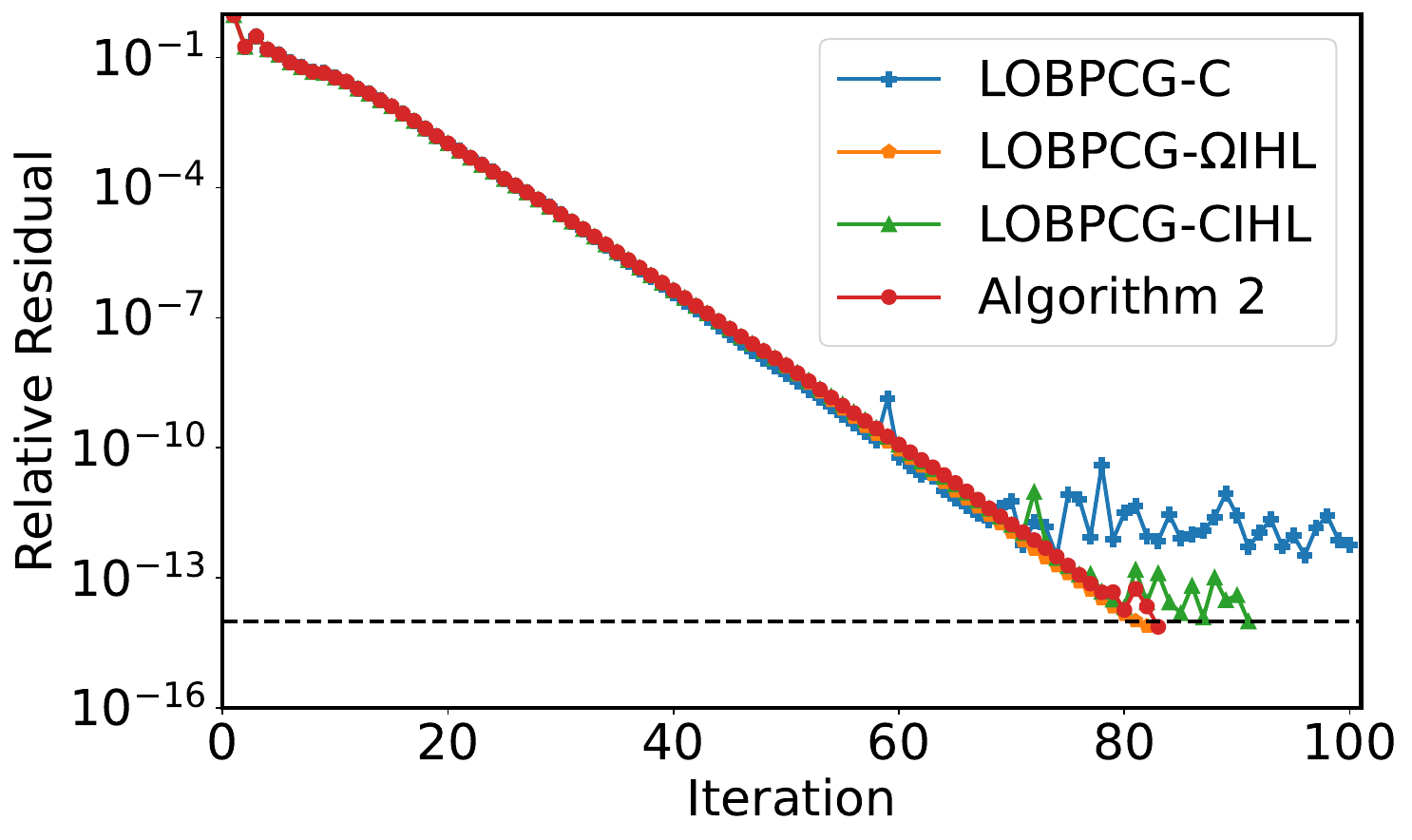} \\
Case \(4\) & Case \(5\)\\
\end{tabular}
\caption{Comparison of different LOBPCG variants.
Convergence history for the smallest example, \texttt{naphthalene} (Case~1),
is omitted because all variants follow nearly identical linear convergence.}
\label{fig:compare_LOBPCG}
\end{figure}

\begin{table}[!tb]
\centering
\small
\caption{Performance comparison of LOBPCG variants for test examples
listed in Table~\ref{tab:BSE-example}.
Data in boldface indicate that the desired level of accuracy is not achieved
within the maximum number of iterations.}
\label{tab:performance-LOBPCG}
\setlength{\extrarowheight}{1pt}
\begin{tabular}{cccccc}
\toprule
Cases &    Metric          & LOBPCG-C & LOBPCG-$\Omega$IHL & LOBPCG-CIHL 
                           & Algorithm~\ref{alg:LOBPCG-CIHL-OmegaIHL} \\
\hline
\multirow{3}*{1}     & Iteration & 63 & 62 & 63 & 65  \\
                     & Time (s)  & 0.2697 & 0.2381 & 0.2670 & 0.2472 \\
                     & Residual  & \(9.393\times 10^{-15}\) & \(8.163\times 10^{-15}\) 
                                 & \(7.944\times 10^{-15}\) & \(9.583\times 10^{-15}\) \\
\midrule 
\multirow{3}*{2}    & Iteration & 181 & 79 & 78 & 80  \\
                    & Time (s)  & 3.176 & 0.9522 & 1.070 & 1.102\\
                    & Residual  & \(8.466\times 10^{-15}\) & \(9.293\times 10^{-15}\) 
                                & \(7.79\times 10^{-15}\)  & \(9.196\times 10^{-15}\)\\                            
\midrule
\multirow{3}*{3}   & Iteration  & 137 & 49 & 56 & 49\\
                   & Time (s)   & 67.35 & 38.49 & 14.64 & 12.30 \\
                   & Residual   & \(9.486\times 10^{-15}\) & \(5.775\times 10^{-15}\) 
                                & \(9.601\times 10^{-15}\) & \(7.53\times 10^{-15}\) \\     
\midrule
\multirow{3}*{4}   & Iteration  & \(\bm{\geq 200}\) & 35 & 46 & 37 \\
                   & Time (s)   & \(\bm{299.5}\) & 33.18 & 27.07 & 22.03 \\
                   & Residual   & \(\bm{4.064\times 10^{-13}}\) & \(4.437\times 10^{-15}\) 
                                & \(2.946\times 10^{-15}\) & \(1.867\times 10^{-15}\)\\
\midrule
\multirow{3}*{5}    & Iteration & \(\bm{\geq 200}\) & 82 & 91 & 83\\
                    & Time (s)  & \(\bm{1672}\) & 1149 & 364.0 & 361.8 \\
                    & Residual  & \(\bm{1.536\times 10^{-12}}\) & \(7.763\times 10^{-15}\) 
                                & \(9.94\times 10^{-15}\) & \(7.435\times 10^{-15}\) \\
\bottomrule
\end{tabular}
\end{table}

\subsection{Real symmetric positive definite matrices}
In the following we compare several structured eigensolvers for the symplectic
eigenvalue problem.
For the structured LOBPCG algorithm, we transform the real symmetric positive
definite matrices to Bethe--Salpeter Hamiltonian matrices by the unitary
similarity described in Theorem~\ref{thm:equivalence}.
Unless otherwise specified, the preconditioner for all LOBPCG variants
is set to \(\Phi(A,0)\), implemented via the incomplete Cholesky factorization
with a drop tolerance of \(10^{-6}\).
Two typical symplectic eigensolvers---the restarted symplectic Lanczos
algorithm (SymplLanczos)~\cite{Amodio2006} and the Riemannian optimization
algorithm~\cite{SAGS2021}, are selected for comparison.
The SymplLanczos algorithm explicitly restarts after every \(\max\set{2l,50}\)
Lanczos steps when computing the \(l\) smallest symplectic eigenvalues.
The tolerances for convergence and for computing the coefficients required for
the restart initial vector are set to \(10^{-14} \) and \(10^{-12} \),
respectively;
see~\cite{Amodio2006} for details.

\subsubsection{Sparse symmetric positive definite matrices}
Five real symmetric definite matrices from the SuiteSparse Matrix Collection%
\footnote{URL: \url{https://sparse.tamu.edu/}.}
(formally, the University of Florida Sparse Matrix Collection~\cite{DH2011})
are selected as test matrices \(M\in\mathbb{R}^{2n\times 2n}\);
see Table~\ref{tab:spd-example}.
The time limit for each example is set to \(3{,}600\) seconds.

\begin{table}[!tb]
\centering
\caption{List of test examples of sparse symmetric positive definite matrices.}
\label{tab:spd-example}
\begin{tabular}{cccccc}
\toprule
Cases & Name & Size (\(n\)) &  nnz(\(M\)) & nnz(\(H\)) &\(\#\)Desired (\(l\))\\
\midrule
1     &\texttt{bcsstk21}           & \phantom{} 1,800    & \phantom{} 26,600 
                                   & \phantom{00} 27,800 & 18\\
2     &\texttt{fv1}                & \phantom{} 4,802    & \phantom{} 85,264 
                                   & \phantom{00} 87,016 & 48\\
3     &\texttt{crystm03}           & 12,384 & 583,770    & 1,288,140 & 100\\
4     &\texttt{apache1}            & 40,400 & 542,184    & \phantom{0} 562,320 & 100\\
5     &\texttt{shallow\_water2}    & 40,960 & 327,680    & \phantom{0} 660,480 & 100\\
\bottomrule
\end{tabular}
\end{table}

\begin{table}[!tb]
\centering
\caption{Performance comparison of LOBPCG variants for test examples listed
in Table~\ref{tab:spd-example}.
Data in boldface mean that the desired level of accuracy is not achieved
within the iteration limit or time limit.}
\label{tab:performance-spd}
\resizebox{\textwidth}{!}{
\begin{tabular}{lcccccc}
\toprule
\multirow{2}{*}{Method} & \multirow{2}{*}{Metric} & \multicolumn{5}{c}{Cases} \\
\cmidrule(lr){3-7}
& & 1 & 2 & 3 & 4 & 5 \\
\midrule
\multirow{2}{*}{SymplLanczos}
 & Time (s)  & 5.180 & 276.2 & 3381 & \(\bm{\geq 3600}\) & \(\bm{\geq 3600}\) \\
 & Residual  & \(7.000\times 10^{-15}\) & \(9.749\times 10^{-15}\) & \(9.087\times 10^{-15}\)
             & \(\bm{0.3728}\) & \(\bm{0.5535}\) \\
\midrule
\multirow{2}{*}{Riemannian}
 & Time (s)  & \(\bm{\geq 3600}\) & \(\bm{\geq 3600}\) & \(\bm{\geq 3600}\) 
             & \(\bm{\geq 3600}\) & \(\bm{\geq 3600}\) \\
 & Residual  & \(\bm{6.795\times 10^{-3}}\) & \(\bm{3.391\times 10^{-12}}\) & \(\bm{7.746\times 10^{-2}}\)
             & \(\bm{8.984\times 10^{-6}}\) & \(\bm{4.854\times 10^{-3}}\) \\
\midrule
\multirow{2}{*}{Algorithm~\ref{alg:LOBPCG-CIHL-OmegaIHL}}
 & Time (s)  & 4.948 & 41.01 & 553.6 & 1447 & 3256 \\
 & Residual  & \(9.247\times 10^{-15}\) & \(8.433\times 10^{-15}\) & \(7.837\times 10^{-15}\)
             & \(6.052\times 10^{-15}\) & \(9.841\times 10^{-15}\) \\
\bottomrule
\end{tabular}}
\end{table}

From Table~\ref{tab:performance-spd}, we observe that Algorithm~\ref{alg:LOBPCG-CIHL-OmegaIHL} 
demonstrates remarkable efficiency, successfully converging for all test cases listed in 
Table~\ref{tab:spd-example}.
The Riemannian algorithm fails to converge for all tested cases
within \(3{,}600\) seconds.
Notably, the LOBPCG-CIHL algorithm fails to solve \texttt{fv1}, \texttt{crystm03}, and
\texttt{shallow\_water2} to the desired accuracy within the prescribed iteration or time
limits, with its relative residuals stagnating near \(10^{-13}\).
This demonstrates the necessity of \(\Omega\)-orthogonalization in overcoming
accuracy barrier for challenging cases.
Therefore, in our subsequent numerical experiments, we shall focus on testing
Algorithm~\ref{alg:LOBPCG-CIHL-OmegaIHL} and discard other LOBPCG variants.

\subsubsection{Dense matrix with known symplectic eigenvalues}
\label{subsec:sym_exa1}
The second numerical experiment adheres to the test example in~\cite{SAGS2021}.
Let \(L\bigl(n/5,1.2,-\sqrt{n/5}\,\bigr)\in\mathcal S(2n)\) be the symplectic
Gauss transformation defined in~\cite{Fassbender2001}.
Let \(U_K\in\mathbb C^{n\times n}\) be a unitary matrix generated by
orthogonalization of a randomly generated complex matrix.
Denote a symmetric positive definite matrix \(M=Q\diag\{D,D\}Q\trans\), where
\[
D=\diag\set{1,\dotsc,n}, \qquad Q=KL\bigl(n/5,1.2,-\sqrt{n/5}\,\bigr),
\]
and
\[
K=\bmat{\Re(U_K) & \Im(U_K)\\-\Im(U_K) & \Re(U_K)}\in\mathcal{OS}(2n).
\]
We compute the \(20\) smallest positive eigenvalues and the corresponding
eigenvectors of the BSH matrix induced by \(M\) with
\(n\in\set{100,200,\dotsc,2000}\).
The maximum execution time for each example is set to \(300\) seconds.

The maximum relative errors and relative residuals for approximate eigenpairs
are shown in Figure~\ref{fig:ex1}.
The corresponding numerical behaviour is presented
in Table~\ref{tab:ex1-time}.
Algorithm~\ref{alg:LOBPCG-CIHL-OmegaIHL} outperforms both the SymplLanczos
and Riemannian algorithms, consistently achieving relative residual precision
below \(10^{-14}\) in the least amount of time.

\begin{figure}[!tb]
\centering
\begin{tabular}{cc}
\includegraphics[width=0.45\textwidth]{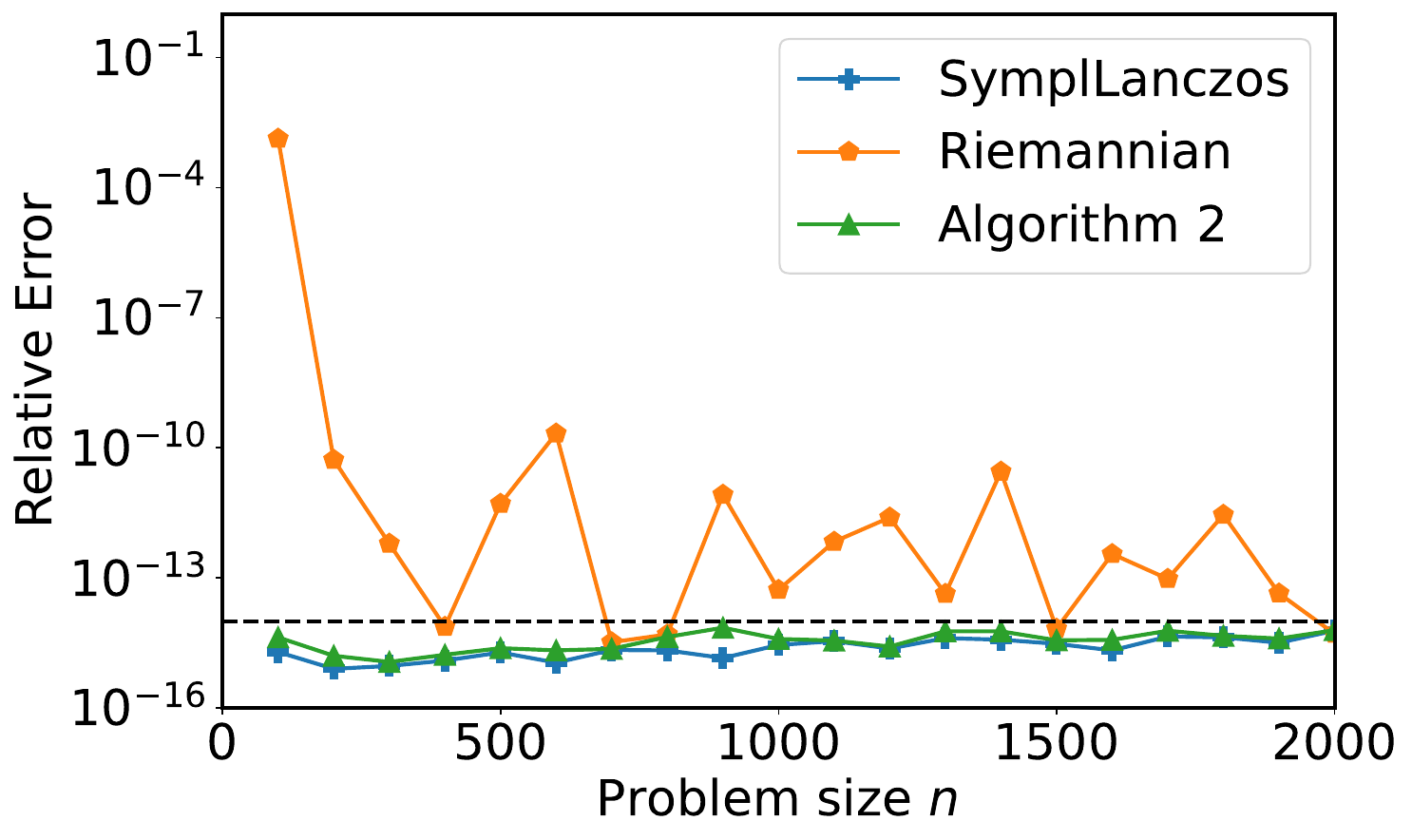} & 
\includegraphics[width=0.45\textwidth]{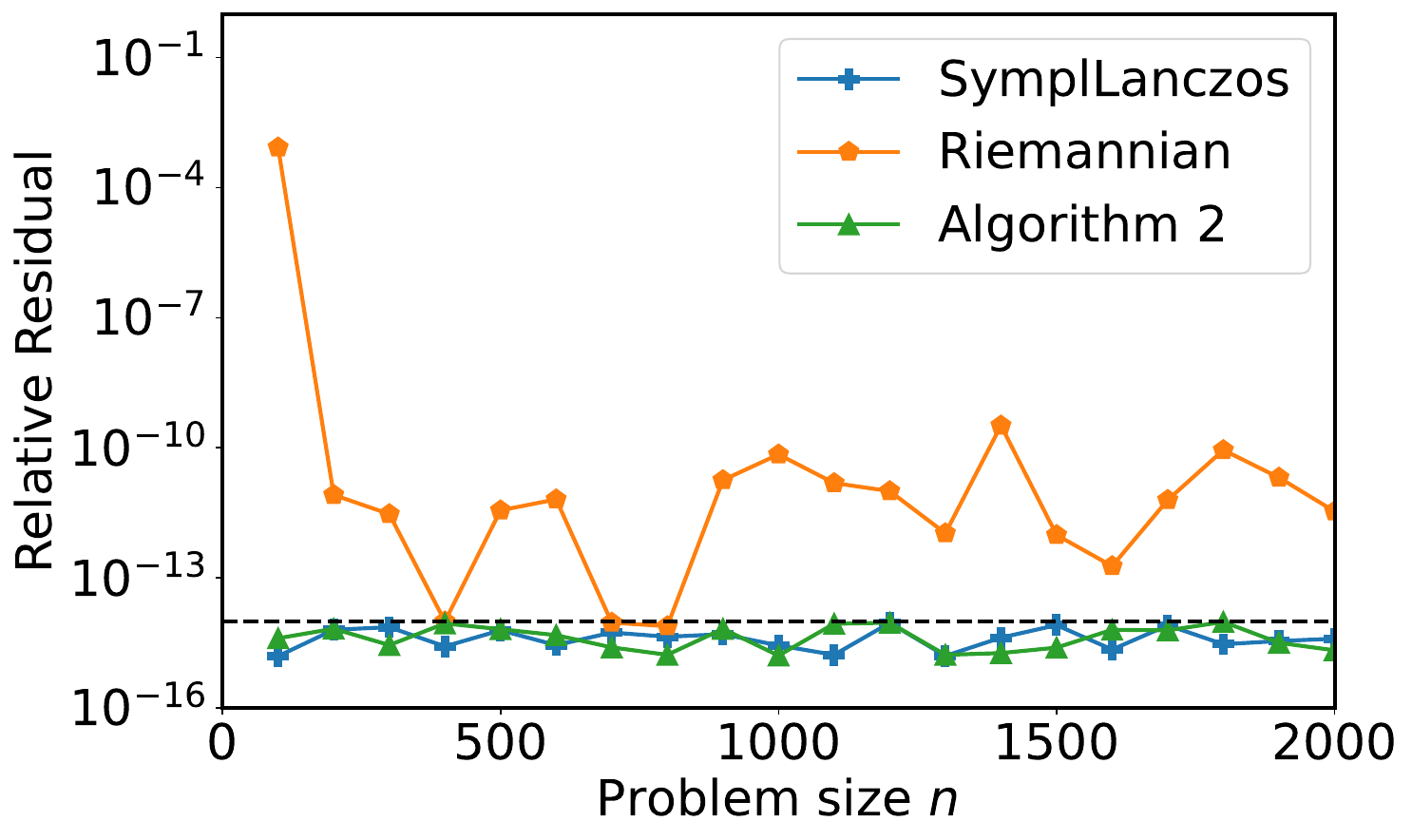}
\end{tabular}
\caption{The maximal relative errors of the computed eigenvalues (left) and their 
corresponding relative residuals (right)}
\label{fig:ex1}
\end{figure}

\begin{table}[!tb]
\centering
\small
\caption{Execution time (in seconds) and relative residual of different
methods (Section \ref{subsec:sym_exa1}).
Data in boldface mean that the desired level of accuracy (\(10^{-14}\)) is not
achieved.}
\label{tab:ex1-time}
\resizebox{\textwidth}{!}{
\begin{tabular}{lcccccc}
\toprule
\multirow{2}{*}{Method}& \multirow{2}{*}{Metric}& \multicolumn{5}{c}{\(n\)} \\
\cmidrule(lr){3-7}
 &   & 400 & 800 & 1200 & 1600 & 2000 \\
\midrule
\multirow{2}{*}{SymplLanczos}
& Time (s)  & 9.199 & 36.58 & 60.34 & 150.7 & 236.7\\
& Residual  & \(2.632\times 10^{-15}\) & \(4.390\times 10^{-15}\) 
            & \(9.189\times 10^{-15}\) & \(2.230\times 10^{-15}\) 
            & \(3.970\times 10^{-15}\)\\
\midrule
\multirow{2}{*}{Riemannian}
& Time (s)  & 69.89 & 94.15 & \(\bm{\geq300}\) & \(\bm{\geq300}\) & \(\bm{\geq300}\)\\
& Residual  & \(9.909\times 10^{-15}\) & \(7.707\times 10^{-15}\) 
            & \(\bm{1.004\times 10^{-11}}\) & \(\bm{1.871\times 10^{-13}}\) 
            & \(\bm{3.424\times 10^{-12}}\)\\
\midrule
\multirow{2}{*}{Algorithm~\ref{alg:LOBPCG-CIHL-OmegaIHL}}
& Time (s) & 2.588 & 10.55 & 25.96 & 65.72 & 129 \\
& Residual & \(8.918\times 10^{-15}\) & \(1.684\times 10^{-15}\) 
           & \(9.258\times 10^{-15}\) & \(6.369\times 10^{-15}\) 
           & \(2.137\times 10^{-15}\)\\
\bottomrule
\end{tabular}}
\end{table}

\subsubsection{Weakly damped gyroscopic system}
\label{subsec:sym_exa2}
The quadratic eigenvalue problem \((\lambda^2 N+\lambda G+K)x=0\)
generated in the stability analysis of the gyroscopic systems, is linearized and
discretized to the standard eigenvalue problem for the Hamiltonian matrix
\[
H_{\rm ham}=\frac14\bmat{-2GN^{-1}
& GN^{-1}G-K \\
4N^{-1} & -2N^{-1}G},
\]
where \(N\) and \(K\) are symmetric positive definite, and \(G\) is
skew-symmetric.
The elements of the matrix \(G\) typically have a much smaller magnitude
compared to \(K\).
Under such circumstances, \(M=J_nH_{\rm ham}\) is a symmetric positive
definite matrix.
The matrices \(N\), \(G\), and \(K\) are generated via an eigenfunction
discretization of a wire saw model (as described in~\cite{BFS2008}) with the
wire speed \(v=0.0306\).
We compute the five smallest symplectic eigenvalues and their corresponding
symplectic eigenvectors of \(M\).
The maximum execution time for each test is set to \(1{,}800\) seconds.

In this example, we utilize \(\Phi\bigl(\Diag(A),\Diag(\conj{B})\bigr)\) as
the preconditioner for Algorithm~\ref{alg:LOBPCG-CIHL-OmegaIHL}.
Table~\ref{tab:ex2-eig} presents the five smallest computed eigenvalues, along
with their corresponding residuals and execution time, for \(n=2000\) and
\(n=5000\).
Algorithm~\ref{alg:LOBPCG-CIHL-OmegaIHL} and the SymplLanczos algorithm successfully 
solve these ill-conditioned test problems, and the Riemannian algorithm fails to 
reduce the relative residual to the desired tolerance of \(10^{-14}\) within 
the prescribed time limit.
Furthermore, the absolute accuracy of the eigenvalues obtained by the Riemannian 
algorithm is relatively poor.

We further evaluate the numerical behaviour of the three methods in computing
the first \(50\) eigenvalues for \(n=5000\).
Within the prescribed time limit, the SymplLanczos and Riemannian algorithms
only achieve accuracies of \(2.564\times10^{-4}\) and \(2.134\times10^{-8}\),
respectively.
In comparison, Algorithm~\ref{alg:LOBPCG-CIHL-OmegaIHL} attains a
significantly higher accuracy of \(6.069\times10^{-15}\) in \(59.48\) seconds.

\begin{table}[!tb]
\centering
\caption{Comparison of different methods (Section~\ref{subsec:sym_exa2}).
Data in boldface mean that the desired level of accuracy (\(10^{-14}\)) is not
achieved.}
\label{tab:ex2-eig}
\renewcommand{\arraystretch}{1}
\begin{tabular}{cccc}
\toprule
Metric & SymplLanczos & Riemannian & Algorithm~\ref{alg:LOBPCG-CIHL-OmegaIHL} \\
\midrule
\multicolumn{4}{l}{Case \(1\): \(n=2000\), \(\cond(M)=9.88\times 10^6\)} \\
\midrule
\(\lambda_1\)   & 3.13865099189288 & 3.13865099189303 & 3.13865099189287 \\
\(\lambda_2\)   & 6.27730198378694 & 6.27730198379529 & 6.27730198378695 \\
\(\lambda_3\)   & 9.41595297568341 & 9.41595297570209 & 9.41595297568337 \\
\(\lambda_4\)   & 12.5546039675834 & 12.5546039678731 & 12.5546039675834 \\
\(\lambda_5\)   & 15.6932549594882 & 15.6932954338389 & 15.6932549594882 \\
Residual        & \(6.442\times 10^{-17}\) & \(\bm{6.940\times 10^{-10}}\)  
                & \(3.553\times 10^{-15}\) \\
Time (s)        & 182.8  & \(\bm{\geq 1800}\)  & 5.278 \\
\midrule
\multicolumn{4}{l}{Case \(2\): \(n=5000\), \(\cond(M)=6.18\times 10^7\)} \\
\midrule
\(\lambda_1\)   & 3.13865099189269 & 3.13874572363742 & 3.13865099189269 \\
\(\lambda_2\)   & 6.27730198378552 & 6.27748742406983 & 6.27730198378544 \\
\(\lambda_3\)   & 9.41595297567839 & 9.41658536766371 & 9.41595297567839 \\
\(\lambda_4\)   & 12.5546039675714 & 12.5556458218930 & 12.5546039675715 \\
\(\lambda_5\)   & 15.6932549594650 & 15.7193822016987 & 15.6932549594650 \\
Residual        & \(8.744\times 10^{-17}\) & \(\bm{2.750 \times 10^{-9}}\)  
                & \(2.759\times 10^{-15}\) \\
Time (s)        & \(907.8\)  & \(\bm{\geq 1800}\)  & 28.62\\
\bottomrule
\end{tabular}
\end{table}

\section{Conclusion and outlook}
\label{sec:conclusions}
In this paper, we present an adaptive structure-preserving LOBPCG algorithm
for computing a few smallest positive eigenpairs of the definite Bethe--Salpeter
eigenvalue problems.
The proposed algorithm employs an adaptive, multi-level orthogonalization
framework, significantly improving computational efficiency.
During the initial stage, the indefinite LOBPCG algorithm utilizes a selective 
reorthogonalization strategy to minimize computational overhead.
If convergence stagnation is detected in some scenarios, the algorithm 
framework adaptively switches to the \(\Omega\)-inner product setting to enhance
numerical stability.
The proposed algorithm is also well-suited for solving the symplectic
eigenvalue problem that is equivalent to the BSEP.
Numerical experiments have confirmed both the computational accuracy
and the efficiency of the algorithm.

The equivalence between the BSEP and the symplectic eigenvalue problem allows
for a natural extension of the LOBPCG algorithm from the \(C_n\)-inner product
to the \(J_n\)-inner product framework.
The induced LOBPCG solver can be applied to the computation of the symplectic
eigenvalues of an even-order, real symmetric positive definite matrix.

Though not reported in the experiments, we observe that the indefinite LOBPCG
algorithm can benefit from the \emph{shrink-and-expand technique} recently
proposed in~\cite{LMS2024} for some test cases.
However, the convergence curve sometimes exhibits oscillations in the
indefinite inner product setting, which limits the acceleration effect for
some test cases.
Further investigation is necessary to achieve more significant acceleration
effects.
This is planned as our future work.

\section*{Acknowledgments}
We thank Bin Gao, and Yuxin Ma for helpful discussions.
Additionally, we are grateful to Yuanfan Xiong, and Zhengbang Zhou for providing
the test matrix \texttt{PNR} presented in Table~\ref{tab:BSE-example}.
This work is partially supported by the National Natural Science Foundation
of China under grant No.~92370105.

\appendix
\section{Loss of orthogonality for the indefinite SVQB algorithm}
\label{sec:appendix-SVQB}
Let \(U\in\mathbb{C}^{2n\times 2p}\), and define the growth factor
\(\rho_U=\lVert U\rVert_2^2/\lVert M_U\rVert_2\).
We consider the explicit floating-point computation of the matrix product
\(M_U=U\herm C_nU\), which satisfies
\[
\hat M_U=M_U+\delta M_U,
\qquad
\lVert\delta M_U\rVert_2\leq c_1\macheps\lVert U\rVert_2^2
= c_1\rho_U\macheps\lVert M_U\rVert_2,
\]
where \(c_1\) is a constant depending on \(n\) and \(p\).

Moreover, we assume that the structured eigenvalue
problem~\eqref{eq:ortho-eig} is solved in a backward stable manner
so that the computed spectral decomposition satisfies
\[
\tilde M_U=\hat M_U+\delta\tilde M_U
=\hat FC_p\hat\Sigma\hat F\herm,\quad\quad
\lVert\delta\tilde M_U\rVert_2\leq c_2\macheps\lVert\hat M_U\rVert_2
\leq c_2\macheps\lVert M_U\rVert_2+O(\macheps^2),
\]
where \(\hat\Sigma=\diag\{\hat\Sigma_{+},\hat\Sigma_{+}\}\) is a positive
definite diagonal matrix, \(\hat F\) is numerically unitary (i.e.
\(\lVert\hat F\herm\hat F-I\rVert_2=O(\macheps)\)), and \(c_2\) is a constant
depending on \(n\) and \(p\).
This can be achieved by the structure-preserving algorithm provided
in~\cite{Shan2025}.
Let \(\sigma_i\)'s and \(\tilde\sigma_i\)'s be the singular values of \(M_U\)
and \(\tilde M_U\), respectively.
Then
\begin{equation}
\label{eq:S}
\lvert\tilde\sigma_i-\sigma_i\rvert\leq\lVert\tilde M_U-M_U\rVert_2
=\lVert\delta\tilde M_U+\delta M_U\rVert_2\leq c_3\macheps\lVert M_U\rVert_2,
\end{equation}
where \(c_3=c_1\rho_U+c_2\).
Thus, \(\tilde\sigma_{\min}\geq\sigma_{\min}-c_3\sigma_{\max}\macheps\).
As a result, we obtain
\begin{equation}
\label{eq:BarTheta}
\lVert\hat\Sigma^{-1}\rVert_2
\leq\frac{1}{\sigma_{\min}-c_3\sigma_{\max}\macheps}.
\end{equation}

Let \(U^{\{1\}}\) be the floating-point representation of the
\(C_n\)-orthonormal basis \(U^{Q}=U\hat F\hat\Sigma^{-1/2}\) generated by the
indefinite SVQB algorithm.
Then we have
\[
U^{\{1\}}=U^{Q}+\delta U^{Q}=U\hat F\hat\Sigma^{-1/2}+\delta U^Q,
\qquad
\lVert\delta U^Q\rVert_2
\leq c_4\macheps\lVert U\rVert_2\lVert\hat F\rVert_2
\lVert\hat\Sigma^{-1/2}\rVert_2,
\]
where \(c_4\) is a constant depending on \(n\) and \(p\).
Let
\[
\tilde c=\frac{\lVert U^Q\rVert_2}
{\lVert U\rVert_2\lVert\hat F\rVert_2\lVert\hat\Sigma^{-1/2}\rVert_2}
\in(0,1].
\]
We infer that
\[
\lVert U^{\{1\}}\rVert_2
\geq\lVert U^Q\rVert_2-\lVert\delta U^Q\rVert_2
\geq(\tilde c-c_4\macheps)\lVert U\rVert_2\lVert\lVert\hat F\rVert_2
\lVert\hat\Sigma^{-1/2}\rVert_2.
\]
Since \(c_4\macheps\) is very small, we assume that \(\tilde c-c_4\macheps>0\).
Then
\begin{align}
\lVert U^Q\rVert_2=\tilde c\lVert U\rVert_2\lVert\hat F\rVert_2\lVert\hat\Sigma^{-1/2}\rVert_2
\leq\frac{\tilde c}{\tilde c-c_4\macheps}\lVert U^{\{1\}}\rVert_2,\label{eq:error2}\\
\lVert\delta U^Q\rVert_2\leq c_4\macheps
\lVert U\rVert_2\lVert\hat F\rVert_2\lVert\hat\Sigma^{-1/2}\rVert_2
\leq\frac{c_4\macheps}{\tilde c-c_4\macheps}\lVert U^{\{1\}}\rVert_2.\label{eq:error3}
\end{align}
Based on \eqref{eq:S}, \eqref{eq:BarTheta}, \eqref{eq:error2}
and~\eqref{eq:error3}, the loss of orthogonality can be bounded as
\begin{align}
&\lVert (U^{\{1\}})\herm C_nU^{\{1\}}-C_p\rVert_2\nonumber\\
={}&\lVert(U^Q+\delta U^Q)\herm C_n(U^Q+\delta U^Q)-C_p\rVert_2\nonumber\\
\leq{}&\lVert(U\hat F\hat\Sigma^{-1/2})\herm C_n(U\hat F\hat\Sigma^{-1/2})
-C_p\rVert_2 + 2\lVert\delta U^Q\rVert_2\lVert U^Q\rVert_2
+ O(\lVert\delta U^Q\rVert_2^2)\nonumber\\
\leq{}&\lVert\hat\Sigma^{-1/2}\hat F\herm\tilde M_U\hat F\hat\Sigma^{-1/2}
-C_p\rVert_2 + \lVert\hat\Sigma^{-1/2}\hat F\herm(\delta\tilde M_U+\delta M_U)
\hat F\hat\Sigma^{-1/2}\rVert_2\nonumber\\
&+ 2\lVert\delta U^Q\rVert_2\lVert U^Q\rVert_2 + O(\lVert\delta U^Q\rVert_2^2)
\nonumber\\
\leq{}&\frac{c_3}{1-c_3\macheps\kappa(M_U)}\cdot\macheps\kappa(M_U)
+\frac{\tilde cc_4}{(\tilde c-c_4\macheps)^2}\cdot2\macheps
\lVert U^{\{1\}}\rVert_2^2.
\label{eq:ortho_bound}
\end{align}
Let
\[
\Delta=\frac{c_3}{1-c_3\macheps\kappa(M_U)}\kappa(M_U)
+\frac{2\tilde cc_4}{(\tilde c-c_4\macheps)^2}\lVert U^{\{1\}}\rVert_2^2,
\]
and assume that \(\Delta\cdot\macheps<1\).
Then we have
\[
\lVert (U^{\{1\}})\herm C_nU^{\{1\}}-C_p\rVert_2\leq\Delta\cdot\macheps,
\]
and
\[
1-\Delta\cdot\macheps\leq\sigma_i^{\{1\}}\leq1+\Delta\cdot\macheps,
\]
where \(\sigma_i^{\{1\}}\) is the \(i\)th largest singular value of
\((U^{\{1\}})\herm C_nU^{\{1\}}\).
Thus
\[
\kappa\bigl((U^{\{1\}})\herm C_nU^{\{1\}}\bigr)
\leq\frac{1+\Delta\cdot\macheps}{1-\Delta\cdot\macheps}.
\]
As long as \(\Delta\cdot\macheps\) is not too close to \(1\), it follows that
\(\kappa\bigl((U^{\{1\}})\herm C_nU^{\{1\}}\bigr)=O(1)\).
In this case, we perform the indefinite SVQB algorithm to \(U^{\{1\}}\) once more.
According to~\eqref{eq:ortho_bound}, this yields a new \(C_n\)-orthonormal
basis \(U^{\{2\}}\) that satisfies
\[
\bigl\lVert\bigl(U^{\{2\}}\bigr)\herm C_nU^{\{2\}}-C_p\bigr\rVert_2
\leq c_5\macheps\bigl(O(1)+2\lVert U^{\{2\}}\rVert_2^2\bigr)
=O(\macheps)\lVert U^{\{2\}}\rVert_2^2.
\]
Therefore, the orthogonality of the indefinite SVQB algorithm can be improved by one
step of reorthogonalization.

Finally, we remark that the growth factor \(\rho_U\) upon convergence is usually
not very large, at least for the leading block of \(U\).
When the LOBPCG algorithm converges, the approximate eigenvectors \(Z\),
satisfy \(Z\herm\Omega Z
=\diag\set{\lambda_1,\dotsc,\lambda_k,\lambda_1,\dotsc,\lambda_k}\) and
\(Z\herm C_nZ=C_k\).
Then the growth factor of the \(Z\) block is always bounded because
\[
\rho_Z=\frac{\lVert Z\rVert_2^2}{\lVert Z\herm C_nZ\rVert_2}
\leq\frac{\lambda_k}{\lambda_1}.
\]

\section{Accumulation of rounding errors on basis update}
\label{sec:appendix-basis}
Let \(U\in\mathbb C^{2n\times2k}\) be \(C_n\)-orthonormal, and \(V\in\mathbb
C^{2k\times2k}\) be \(C_k\)-orthonormal.
Then \(Z=UV\) is also \(C_n\)-orthonormal.
Suppose that \(\hat U\) and \(\hat V\), respectively, are the computed results
of \(U\) and \(V\) in floating-point arithmetic, satisfying
\[
\lVert\hat U\herm C_n\hat U-C_k\rVert_2\leq O(\macheps)\lVert\hat U\rVert_2^2,
\qquad
\lVert\hat V\herm C_k\hat V-C_k\rVert_2\leq O(\macheps)\lVert\hat V\rVert_2^2.
\]
Ideally, we would like to formulate \(\hat Z\approx Z\) such that
\begin{equation}
\label{eq:ortho-loss-Z}
\lVert\hat Z\herm C_n\hat Z-C_k\rVert_2
\leq O(\macheps)\lVert\hat Z\rVert_2^2.
\end{equation}
Unfortunately, even if \(\hat Z=\hat U\hat V\) is computed exactly, the loss
of \(C_n\)-orthogonality is bounded by
\begin{equation}
\label{eq:ortho-loss-UV}
\lVert\hat Z\herm C_n\hat Z-C_k\rVert_2
\leq O(\macheps)\lVert\hat U\rVert_2^2\lVert\hat V\rVert_2^2.
\end{equation}
Note that \(\lVert V\rVert_2\geq1\).
This implies that a multiplicative update of an \(C_n\)-orthonormal basis
(i.e., \(U\mapsto UV\)) almost always causes a larger accumulation of rounding
errors in floating-point arithmetic.
However, the right-hand side of~\eqref{eq:ortho-loss-UV} can potentially be
much larger than that of~\eqref{eq:ortho-loss-Z}, especially when
\(\rho_{\hat Z}<\rho_{\hat U}\) (or, equivalently,
\(\lVert\hat Z\rVert_2<\lVert\hat U\rVert_2\)).
In order to produce a better \(C_n\)-orthonormal basis, it is recommended to
explicitly perform one step of \(C_n\)-orthogonalization on \(\hat Z\), so
that the loss of \(C_n\)-orthogonality becomes
\[
\lVert\hat Z_{\mathrm{new}}\herm C_n\hat Z_{\mathrm{new}}-C_k\rVert_2
\leq O(\macheps)\lVert\hat Z_{\mathrm{new}}\rVert_2^2,
\]
which is comparable to the ideal bound in~\eqref{eq:ortho-loss-Z}.

\addcontentsline{toc}{section}{References}

\end{document}